\newtheorem{thm}{Theorem}[section]
\newtheorem{lem}{Lemma}[section]
\newtheorem{prop}{Proposition}[section]
\newtheorem{remark}{Remark}[section]
\newcommand{\R}{\mathbb{R}}
\begin{document}
\title[A degenerate diffusion equation with a multistable reaction]{Asymptotic Behavior of Solutions of a Degenerate Diffusion Equation with a Multistable Reaction$^\S$}
\thanks{$\S$ This research was supported by the National Natural Science Foundation of China (12071299, 12471199).}
\author[F. Li, B. Lou]
{Fang Li$^{\dag}$,\ Bendong Lou$^{\dag, *}$}
\thanks{$\dag$ Mathematics and Science College, Shanghai Normal University, Shanghai 200234, China.}
\thanks{{\bf Email:} {\sf lifwx@shnu.edu.cn} (F. Li), {\sf lou@shnu.edu.cn} (B. Lou).}
\thanks{$^*$ Corresponding author.}
\date{\today}

\begin{abstract}
We consider a generalized degenerate diffusion equation with a reaction term $u_t=[A(u)]_{xx}+f(u)$, where $A$ is a smooth function satisfying $A(0)=A'(0)=0$ and $A(u),\ A'(u),\ A''(u)>0$ for $u>0$, $f$ is of monostable type in $[0,s_1]$ and of bistable type in $[s_1,1]$. We first give a trichotomy result on the asymptotic behavior of the solutions starting at compactly supported initial data, which says that, as $t\to \infty$, either small-spreading (which means $u$ tends to $s_1$), or transition, or big-spreading (which means $u$ tends to $1$) happens for a solution. Then we construct the classical and sharp traveling waves (a sharp wave means a wave having a free boundary which satisfies the Darcy's law) for the generalized degenerate diffusion equation, and then using them to characterize the spreading solution near its front.
\end{abstract}

\subjclass[2020]{35K65, 
35K57,
35B40,
35K15.
}
\keywords{Nonlinear diffusion equation; degenerate diffusion equation; porous medium equation; Cauchy problem; asymptotic behavior; traveling wave; spreading speed.}

\maketitle

\baselineskip 17pt


\section{Introduction}
In this paper, we consider the following Cauchy problem
\begin{equation*}\label{p}
\begin{cases}
u_t=[A(u)]_{xx}+f(u),~  &x\in\mathbb{R},~t>0,\cr
u(0,x)=u_0(x),~        &x\in\mathbb{R},
\end{cases}
\tag{P}
\end{equation*}
where $f$ and $u_0$ will be specified below, $A$ belongs to the following class
\begin{equation*}\label{ass-A}
\mathcal{A}:= \left\{ A \left|
 \begin{array}{l}
 A\in C^1([0,\infty)) \cap C^\infty ((0,\infty)), \  A(u), A'(u), A''(u)>0 \mbox{ for }u>0,\\
A(0)=A'(0)=0,\  \int_0^1 \frac{A'(r)}{r} dr<\infty,\ \frac{rA''(r)}{A'(r)} \to A_* >0\ \mbox{ as } r\to 0+0
 \end{array}
 \right.
 \right\}.
\tag{A}
\end{equation*}
A typical example of $A(u)$ is $u^m\ (m>1)$, which corresponds the porous medium equation and
is used in particular to model the population dynamics where the diffusion depends on the population density  (cf. \cite{A2, Vaz-book, WuYin-book}). Other examples include $A(u)=u^{3/2}+u^2$, $A(u)=u^2 \ln (u+1)$, etc., the equation is degenerate at $u=0$ in any of these examples.

When $A(u)\equiv u$, the diffusion term in \eqref{p} is a non-degenerate linear one. The corresponding reaction diffusion equations have been extensively studied. In particular, the equations with typical monostable, bistable and combustion types of reactions have been understood very well (cf. \cite{AW1, AW2, DM, FM, F, KPP, Z} etc.).
In recent years, the linear diffusion equations with multistable reactions have also been studied. One example is the following monostable-bistable type of reaction:
\begin{equation*}\label{ass-f}
\left\{
\begin{array}{l}
f(u)\in C^2([0,\infty)),\ \mbox{it has exactly four zeros}: 0<s_1 <s_2<1,\
f'(0)>0>f'(s_1), f'(1)<0, \\
f \mbox{ is of monostable type in } [0,s_1] \mbox{ and of bistable type in } [s_1,1], \ \int_{s_1}^1  A'(s) f(s) ds >0.
\end{array}
\right.
\tag{F}
\end{equation*}
(The $C^2$ assumption is only used to derive the lower bound of $u_{xx}$ at the free boundaries when we consider the well-posedness, see \cite{LZ, Vaz-book}. In other places we only require that $f$ is $C^1$ or Lipschitz.) Reaction diffusion equations with such reactions were considered by Ludwig, Aronson, and Weinberger \cite{LAW} in a population model on the spruce budworm. Kawai and Yamada \cite{KY}, Kaneko, Matsuzawa and Yamada \cite{KMY-SIAM, KMY-JMPA} also studied such equations with free boundaries. In particular, they specified the asymptotic behavior of solutions.
Recently, Polacik \cite{Polacik-MAMS} gave a very complete analysis on linear diffusion equations with generic multistable reactions. When $f$ is a multistable heterogeneous reaction, the long time behavior of the solutions have also been considered by Ducrot, Giletti and Matano \cite{DGM} (for spatially periodic case) and by Ding and Matano \cite{DingMatano} (for temporally periodic case). In particular, \cite{DGM} proposed the concept {\it propagating terrace} to characterize the spreading phenomena of the solutions in such equations.

In some cases, the diffusion rate depends on the density itself. So the corresponding equation may involve a nonlinear-diffusion term. A typical example of such kind is the following porous medium equation with a reaction (RPME, for short):
\begin{equation}\label{PME}
u_t = (u^m)_{xx} + f(u),\qquad x\in \R,\ t>0,
\end{equation}
where $m>1$, implying that the diffusion is a nonlinear, degenerate and slow one. When $f$ is a monostable reaction, the equation \eqref{PME} was considered as a population model by Aronson \cite{A2}. Later, this kind of equations were studied as models in population dynamics, combustion problems, propagation of interfaces and nerve impulse propagation phenomena in porous media, as well as the propagation of intergalactic civilizations in the field of astronomy (cf. \cite{GN, GM, NS, SGM} etc.). Recently, Lou and Zhou \cite{LZ} gave a general convergence result for bounded solutions of \eqref{PME} by using a generalized zero number argument. Their results imply that any bounded solution converges as $t\to \infty$ to a stationary one, and so they could use this general result to specify the asymptotic behavior for solutions of \eqref{PME} with monostable, bistable and combustion reactions. In the last years, Du,  Quir\'{o}s and Zhou \cite{DQZ}, and G\'{a}rriz \cite{G} used the sharp wave to characterize the spreading solutions of \eqref{PME}.

In this paper, we will study the problem \eqref{p} with a monostable-bistable reaction, that is, a generic degenerate diffusion equation with multistable reaction.  We will specify the asymptotic behavior of the solutions of the Cauchy problem. We choose initial data from the following set: for some $b>0$,
\begin{equation*}\label{init-val}
u_0\in \mathcal{I}(b):=\{\phi~|~\phi\in C(\mathbb{R}),~\phi>0 \text{ in } (-b,b), \text{ and }	\phi\equiv0 \text{ otherwise}\}.
\tag{I}
\end{equation*}
Due to the degeneracy of the equation, the Cauchy problem \eqref{p} with $u_0\in \mathcal{I}(b)$ has only a weak solution, which is defined as the following (cf. \cite{Knerr, Vaz-book, WuYin-book}): for any $T>0$, denote $Q_T:=\mathbb{R}\times(0,T)$, then a function $u(x,t)\in C(Q_T)\cap L^\infty (Q_T)$ is called a {\it very weak solution} of \eqref{p} if for any $\varphi \in C^\infty_c (Q_T)$, there holds
\begin{equation}\label{def-weak sol}
\int_{\mathbb{R}} u(x,T)\varphi(x,T) dx  =  \int_{\mathbb{R}} u_0(x) \varphi(x,0)dx +
\iint_{Q_T} f(u) \varphi  dx dt  + \iint_{Q_T} [u\varphi_t + A(u) \varphi_{xx}] dx dt.
\end{equation}
As an extension of the definition, if $u$ satisfies \eqref{def-weak sol} with inequality \lq\lq $\geq$" (resp. \lq\lq $\leq$"), instead of equality, for every test function $\varphi\geq 0$, then $u$ is called a {\it very weak supersolution} (resp. {\it very weak subsolution}) of \eqref{p} (cf. \cite[Chapter 5]{Vaz-book}). From \cite{ACP, Knerr, PV, S, Vaz-book} etc. we know that under the assumption \eqref{ass-A} and \eqref{ass-f}, the problem \eqref{p} with $u_0\in \mathcal{I}(b)$ has a unique very weak solution $u(x,t)\in C(Q_T)\cap L^\infty (Q_T)$ for any $T>0$, $u(x,t)\geq 0$ in $Q_T$,
and the support of $u(\cdot,t)$ is an interval between the left free boundary $l(t)$ and the right one $r(t)$. In addition, after some waiting times $t_*(\pm b)$, both free boundaries move outward monotonically according to the Darcy's law:
$$
r'(t)= - v_x(r(t)-0,t) \mbox{ for }t>t_*(b),\qquad l'(t) =-v_x(l(t)+0,t) \mbox{ for } t>t_*(-b),
$$
where $v(x,t)$ denotes the (generalized) pressure function corresponding to $u(x,t)$:
\begin{equation}\label{def-pressure}
v(x,t) = \Lambda(u(x,t)) := \int_0^{u(x,t)} \frac{A'(r)}{r} dr.
\end{equation}
(In this paper we always use $v_x(a-0,t)$/$v_x(a+0,t)$ to denote the left/right derivative of $v$ at $x=a$.) The Darcy's law is a well known property for the PME or nonlinear, degenerate diffusion equations without reactions (cf. \cite{A2, Knerr, Vaz-book}). For the equations with reactions, it is
also believed to be true. In fact, it was derived in detail in \cite{LZ} for the PME with a reaction, the derivation remains valid for other nonlinear, degenerate diffusion equations as in \eqref{p}.

We first show a trichotomy result on the asymptotic behavior of solutions of \eqref{p}:

\begin{thm}\label{thm1}
Assume \eqref{ass-A} and \eqref{ass-f}. Let $\phi\in \mathcal{I}(b)$ and $u_\sigma(x,t)$ be the very weak solution of \eqref{p} with $u_0=\sigma\phi$ for $\sigma>0$. Then there exists $\sigma_* \in (0,\infty]$ such that the following trichotomy result holds:
\begin{enumerate}[{\rm (i)}]
\item {\rm Small spreading:} for $0<\sigma<\sigma_*$, $u_\sigma(\cdot,t)\to s_1$ as $t\to \infty$, in the topology of $L^\infty_{loc}(\mathbb{R})$.
\item {\rm Transition:} when $\sigma_* <\infty$, $u_{\sigma_*}(\cdot,t)\to U^* (\cdot -x_0)$ as $t\to \infty$, in the topology of $L^\infty_{loc}(\mathbb{R})$, where $x_0\in [-b,b]$ and $U^*\in C^2(\R)$ is an even stationary solution with $U^*(\pm \infty)=s_1$ and $(U^*)'(x)<0$ for $x>0$.

\item {\rm Big spreading:} when $\sigma_*<\infty$, $u_\sigma(\cdot,t)\to 1$ as $t\to \infty$ for any $\sigma>\sigma_*$, in the topology of $L^\infty_{loc}(\mathbb{R})$.
\end{enumerate}
In addition, in any of these cases, $r(t),-l(t)\to \infty$ as $t\to \infty$.
\end{thm}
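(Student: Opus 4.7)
The plan is to run a threshold-value argument in $\sigma$, supported by two inputs. The first is the comparison principle for very weak solutions of \eqref{p}, which yields that $\sigma\mapsto u_\sigma(x,t)$ is monotone nondecreasing and continuous in $\sigma$ uniformly on bounded time intervals. The second, and more substantial, is a convergence theorem asserting that every bounded very weak solution of \eqref{p} tends in $L^\infty_{\mathrm{loc}}(\mathbb{R})$ as $t\to\infty$ to a bounded nonnegative stationary solution of $[A(U)]_{xx}+f(U)=0$. I would establish the latter by adapting the generalized zero-number argument of Lou--Zhou \cite{LZ}, originally carried out for $A(u)=u^m$, to the class $\mathcal{A}$; the quantitative asymptotic $rA''(r)/A'(r)\to A_*>0$ near $u=0$ is precisely what is needed to count intersections cleanly across the degenerate free boundary.

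Next I would classify the admissible $\omega$-limit stationary profiles. Besides the constants $0,s_1,s_2,1$, setting $W:=A(U)$ turns the stationary equation into the Hamiltonian ODE $W_{xx}=-f(A^{-1}(W))$. In the phase plane, $A(s_1)$ and $A(1)$ are saddles while $A(s_2)$ is a center; the energy condition $\int_{s_1}^1 A'(s)f(s)\,ds>0$ translates into the strict Hamiltonian inequality that forces the stable/unstable manifolds at $A(s_1)$ to close into a homoclinic loop encircling $A(s_2)$ without reaching $A(1)$. This produces, up to translation, the unique even profile $U^*\in C^2(\mathbb{R})$ with $U^*(\pm\infty)=s_1$, $(U^*)'<0$ for $x>0$, and $U^*(0)\in(s_2,1)$. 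The only other non-constant bounded stationary solutions have range in $[0,s_1]$ and compact support; these, along with the unstable constants $0$ and $s_2$, will be excluded as $\omega$-limits of $u_\sigma$ below by hair-trigger/ODE comparisons.

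Set $\Sigma^-:=\{\sigma>0 : u_\sigma\to s_1\}$ and $\Sigma^+:=\{\sigma>0 : u_\sigma\to 1\}$, both in $L^\infty_{\mathrm{loc}}$. For $\sigma$ with $\sigma\max\phi<s_1$, the constant $s_1$ is a stationary supersolution, so $u_\sigma\le s_1$; the monostable structure of $f$ on $[0,s_1]$ together with a hair-trigger subsolution (available because $A_*>0$) drives $u_\sigma$ up to $s_1$ locally and forces $r(t),-l(t)\to\infty$, whence $\Sigma^-\neq\emptyset$. Monotonicity in $\sigma$ makes $\Sigma^-$ a down-set and $\Sigma^+$ an up-set. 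Openness of both is obtained by sub/supersolution perturbation: $u_\sigma(\cdot,T)\le s_2-\delta$ (resp.\ $u_\sigma(\cdot,T)\ge s_2+\delta$ on a long interval) persists under small changes of $\sigma$ by continuous dependence, after which the stationary supersolution $s_2$ combined with the ODE $U'=f(U)$ drives the perturbed solution to $s_1$ (resp.\ a compactly supported subsolution built by truncating $U^*$, or a sharp wave constructed later in the paper, drives it to $1$). Setting $\sigma_*:=\sup\Sigma^-\in(0,\infty]$, these openness statements give $\Sigma^-=(0,\sigma_*)$ and, when $\sigma_*<\infty$, $\Sigma^+=(\sigma_*,\infty)$. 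At the critical value $\sigma=\sigma_*<\infty$, the limit can be neither $s_1$ nor $1$ nor a compactly supported stationary solution (the latter again excluded by hair-trigger from below), so by the classification it is a translate $U^*(\cdot-x_0)$, and the location $x_0\in[-b,b]$ follows from a zero-number/reflection comparison of $u_{\sigma_*}(\cdot,t)$ with the reflections of $U^*(\cdot-x_0)$ about the endpoints $\pm b$ of the initial support.

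The spreading $r(t),-l(t)\to\infty$ in all three cases is a byproduct: the pointwise limits $s_1$, $U^*\ge s_1$, and $1$ are uniformly bounded below by $s_1>0$, so by Darcy's law the free boundaries cannot stagnate and the support must exhaust $\mathbb{R}$. The main technical obstacle I anticipate is the convergence-to-stationary-solution input: porting the zero-number machinery of \cite{LZ} from $A(u)=u^m$ to the full class $\mathcal{A}$ requires sharp regularity and semiconvexity estimates on the pressure $v$ near the free boundary that must be derived from the single scalar parameter $A_*$, and these are what underpin both the classification step and the openness arguments above.
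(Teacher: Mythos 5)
Your overall architecture — general convergence to a stationary profile via the Lou--Zhou zero-number machinery, classification by phase plane, threshold in $\sigma$ via monotonicity and openness of the sub- and super-threshold sets — is the same as the paper's. But there are three genuine gaps, the first of which is the most serious.

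First, your classification of bounded nonnegative stationary solutions is incomplete. You assert that besides the constants and the translates of $U^*$, ``the only other non-constant bounded stationary solutions have range in $[0,s_1]$ and compact support.'' In fact, since $A(s_2)$ is a center, there is a one-parameter family of closed orbits around it, i.e.\ genuinely periodic stationary solutions with values ranging across $s_2$; there are also compactly supported profiles with maximum in $(\theta,1)$ and monotone profiles positive on a half-line and tending to $s_1$ or to $1$. The paper has to exclude each of these as possible $\omega$-limits. The periodic solutions in particular are excluded by an intersection-number (Sturm) argument: the zero number of $u_\sigma(\cdot,t)-U_{per}$ is finite for every $t>0$, whereas $s_2-U_{per}$ and $U_{per}-s_2$ each have infinitely many zeros, which simultaneously rules out $s_2$ and the whole periodic family. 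Your ``hair-trigger/ODE comparisons'' phrase does not engage with this; a naive ODE comparison against $\dot u = f(u)$ cannot dislodge a PDE solution sitting near $s_2$ uniformly, and it certainly does not touch the oscillating stationary states. The half-line-monotone and compactly supported solutions, on the other hand, are ruled out in the paper because the convergence theorem is stated for stationary solutions \emph{of the Cauchy problem}, not merely of the ODE: those profiles have $[\Lambda(U)]'\to\pm\infty$ at the free boundary, hence zero waiting time, hence evolve instantly under the flow and cannot be $\omega$-limits. Your version of the convergence input (``stationary solution of $[A(U)]_{xx}+f(U)=0$'') is too weak to deliver this automatically and you do not supply a substitute.

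Second, the statement ``these openness statements give $\Sigma^-=(0,\sigma_*)$ and, when $\sigma_*<\infty$, $\Sigma^+=(\sigma_*,\infty)$'' does not follow from what you have established. Openness plus down-set/up-set structure give $\Sigma^-=(0,\sigma_*)$ and $\Sigma^+=(\sigma^*,\infty)$ for some $\sigma^*\ge\sigma_*$, but nothing so far forces $\sigma_*=\sigma^*$; a priori there could be a nondegenerate interval of transition values. The paper closes this gap by invoking a specific argument (\cite[Lemma 6.3]{LZ}), essentially a strong-ordering contradiction between two distinct transition solutions both converging to translates of $U^*$. You need to include this step explicitly.

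Third and more minor: your $W:=A(U)$ Hamiltonian reformulation is correct and slightly cleaner than the paper's $(q,p)=(U,[A(U)]')$ variables, and the identification of the homoclinic loop through $A(s_1)$ encircling the center $A(s_2)$ under the sign condition $\int_{s_1}^1 A'(s)f(s)\,ds>0$ is right; but without addressing the periodic orbits that same phase plane makes inevitable, the classification cannot be used to conclude that the critical $\omega$-limit is a translate of $U^*$.
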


This theorem specifies the asymptotic behavior of the solutions in $L^\infty_{loc} (\mathbb{R})$ topology. Another interesting problem is to study the asymptotic profile of a solution near its free boundaries and/or near its interior fronts. As in the linear diffusion equations, the traveling waves and propagating terrace play important roles in such problems. For our equation we call a solution $u$ as a traveling wave if it has the form $u(x,t)=Q_c(x-ct)=Q_c(\xi)$ for some speed $c>0$ and some profile $Q_c$, where $Q_c \in C(\mathbb{R})$, and
\begin{align}\label{trav-equ}
    \int_{\mathbb{R}} [A(Q_c) \varphi''- cQ_c\varphi'+f(Q_c)\varphi] d\xi=0,\qquad \varphi \in C_c^\infty(\mathbb{R}).
\end{align}
By using a phase plane analysis (see details in the last section, see also \cite{A2} for RPMEs), we will see that our equation has two kinds of (rightward propagating) traveling waves, all of them have decreasing profiles. Each of the first kind has a positive profile and connects two steady states as the following
\begin{align}\label{trav-cond}
Q_c(-\infty)=s_1\ \ (\mbox{resp.,}\ 1,\ 1), \quad Q_c(+\infty)=0\ \ (\mbox{resp.,}\ 0,\ s_1),\quad \mbox{ and } Q'_c(\xi)<0 \mbox{ for } \xi \in \R.
\end{align}
In particular, the traveling wave connecting $1$ at $\xi=-\infty$ and $s_1$ at $\xi=+\infty$, denoted by $Q_{c_z} (x-c_z t)$, will be used to describe the {\it propagating terrace} phenomenon in the range $(s_1, 1)$. Each of the second kind, however, has only positive profile on the half line:
\begin{align}\label{sharp-wave}
\left\{
\begin{array}{l}
Q_c(-\infty)=s_1\ \ (\mbox{resp.,}\ 1), \quad Q_c(\xi)=0 \mbox{ for } \xi \geq  0, \quad
Q'_c(\xi)<0 \mbox{ for } \xi<0,\\
\mbox{and it satisfies the Darcy's law}: \quad c= - [\Lambda(Q_c)]'(0-0).
\end{array}
\right.
\end{align}
This kind of traveling waves are called {\it sharp waves}. When $Q_c(-\infty)=s_1$, the sharp wave, denoted by $Q_{c_s}(x-c_s t)$, is unique, and $c_s>0$. It is called a {\it small sharp wave} for convenience. Similarly, when $Q_c(-\infty)=1$, the sharp wave is also unique (if it exists, see details in the last section), which is denoted by $Q_{c_b}(x-c_b t)$ and is called a {\it big sharp wave}.

The following result shows that the small sharp wave can be used to describe the asymptotic behavior of the solution in the range $u\in (0,s_1)$ when the small spreading, transition or some cases of the big spreading happens.

\begin{thm}\label{thm2}
Assume \eqref{ass-A} and \eqref{ass-f}. Let $u(x,t)$ be a global solution of \eqref{p} with $u_0\in \mathcal{I}(b)$. Then there exist constants $l^*, r^*\in\mathbb{R}$ such that
\begin{equation}\label{s-profile}
 \lim_{t\to\infty}\sup_{x\in [H(t),+\infty)}|u(x+r(t),t)-Q_{c_s}(x)|=0, \ \
    \lim_{t\to\infty} [r(t)-c_s t]=r^*,
\end{equation}
\begin{equation}\label{s-profile-left}
 \lim_{t\to\infty}\sup_{x\in(-\infty, -H(t)]}|u(x+l(t),t)-Q_{c_s}(- x)|=0, \ \
    \lim_{t\to\infty}[l(t)+c_s t]=l^*,
\end{equation}
if one of the following cases holds:
\begin{enumerate}[{\rm (i)}]
\item the small spreading  happens, and $H(t)\equiv -c_s t$;
\item $u$ is a transition solution, and $H(t)=(\delta -c_s) t$ for any small $\delta >0$;
\item the big spreading  happens, $c_s >c_z$ and $H(t)=(c-c_s)t$ for any $c\in (c_z, c_s)$.
\end{enumerate}

In addition, if the big spreading happens and $c_s =c_z$, then $r(t) = c_s t + o(t),
\ l(t)=-c_s t +o(t)$, and, for any given number $H_0>0$, the first limit in
\eqref{s-profile} and that in \eqref{s-profile-left} hold for $H(t)\equiv -H_0$.

\end{thm}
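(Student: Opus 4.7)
The plan is to use the small sharp wave $Q_{c_s}$ as a comparison barrier together with the uniqueness of the sharp-wave profile, and to separate the argument into (a) controlling the position of the right free boundary $r(t)$, and (b) identifying the profile of $u(\cdot + r(t),t)$ on the moving window $[H(t),\infty)$. The statement for $l(t)$ follows by applying the same argument to the reflected problem $\tilde{u}(x,t)=u(-x,t)$, so I focus on the right front.

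First, I would establish in each of cases (i)--(iii) that $r(t) - c_s t$ stays bounded. The upper estimate $\limsup r(t)/t \leq c_s$ comes from placing a shifted small sharp wave $Q_{c_s}(x - c_s t - K)$ above $u_0$ for $K$ large (using $Q_{c_s}(-\infty) = s_1$ and the fact that in cases (i)--(ii) $u$ stays essentially below $s_1 + \varepsilon$ on compacts, while in case (iii) the excess above $s_1$ propagates at speed $c_z < c_s$, so it cannot catch up with $Q_{c_s}$). The lower estimate uses that after some time $u(\cdot,t)$ lies above $s_1/2$ on an interval that expands at speed $\geq c_s - \varepsilon$; one then slides a small sharp wave $Q_{c_s}(x-c_s t - K')$ in from the right to obtain $\liminf (r(t)-c_s t) > -\infty$. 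In case (iv) with $c_s = c_z$ only the weaker conclusion $r(t) = c_s t + o(t)$ is true; it is obtained by squeezing $r(t)$ between nearby speeds $c_s \pm \varepsilon$ realized by approximate sharp waves and sending $\varepsilon \to 0$.

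Second, once the position of $r(t)$ is controlled, I would run a compactness argument on the shifted family $u_n(x,s) := u(x + r(t_n), t_n + s)$ along any sequence $t_n \to \infty$. Writing everything in terms of the pressure $v = \Lambda(u)$ (so that the Darcy regularity in the introduction applies), the $v_n$'s are locally equicontinuous, and parabolic regularity where $u>0$ allows extraction of a subsequential weak limit $u_\infty(x,s)$ solving (P) on $\R\times\R$, whose right free boundary passes through the origin at $s=0$ and moves with some asymptotic speed. Each such entire solution must be either a stationary state or a rightward traveling wave on the free-boundary side; I would then invoke the generalized Sturm/zero-number principle for the degenerate equation (as developed in \cite{LZ}) to compare $u_\infty$ with the family of shifted sharp waves $Q_{c_s}(\cdot - a)$, exploiting monotonicity of $u_\infty$ to conclude $u_\infty \equiv Q_{c_s}(\cdot - a^*)$ for a unique shift. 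This identifies the limit uniquely, upgrades subsequential convergence to full convergence, and yields both $r(t) - c_s t \to r^*$ and the first limit in \eqref{s-profile} on a window of bounded width near the front.

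Third, to extend the uniform convergence from a bounded neighborhood of the front to the larger moving window $[H(t),\infty)$ prescribed in each case, I would match the front layer with the interior regime: in (i) the interior regime is $u \to s_1$ and $Q_{c_s}(-\infty) = s_1$, so the two profiles agree on the complement and uniformity on $[-c_s t, \infty)$ follows; in (ii) the window $H(t) = (\delta - c_s)t$ stays a distance $\delta t \to \infty$ from the stationary core $U^*(\cdot - x_0)$, which itself decays to $s_1$, so again only the $s_1$-plateau of $Q_{c_s}$ is sampled; in (iii) with $c_s > c_z$ the terrace wave $Q_{c_z}$ separates the $1$-region from the $s_1$-plateau, and the window $H(t)=(c-c_s)t$ with $c\in(c_z,c_s)$ lies strictly in the plateau, so matching again reduces to $s_1$. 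For (iv), where $c_s = c_z$, the plateau collapses and one can only match on a \emph{bounded} window $[-H_0,\infty)$ from the front; the same compactness argument delivers the stated convergence there.

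The main obstacle I anticipate is case (iv): the equality $c_s = c_z$ destroys the plateau separating the front from the bistable core, so the barriers used in cases (i)--(iii) no longer give $O(1)$ control of $r(t)-c_s t$, and one must be content with the sublinear correction $o(t)$ and a fixed window $[-H_0,\infty)$. A secondary technical point is the rigorous use of the zero-number principle in the degenerate setting; this requires working with the pressure $v$ and carefully handling intersections on the free boundary, which is exactly where sharp waves differ from classical ones.
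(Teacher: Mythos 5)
Your high-level skeleton (barriers for the position of $r(t)$, then compactness plus a zero-number identification of the limit profile, then matching with the interior regime to pass from a bounded window to $[H(t),\infty)$) is the right one and agrees with the structure of the paper's proof. However, two of the key steps, as you describe them, do not actually go through.

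First, the barrier argument in Part 1 cannot be run with the \emph{unperturbed} shifted sharp wave $Q_{c_s}(x-c_st-K)$. In cases (ii) and (iii) the solution overshoots $s_1$ for all time in the interior (in the transition case $u\to U^*$ with $\max U^*=\theta\in(s_2,1)$; in the big-spreading case $u\to 1$ behind the terrace wave), and even in case (i) the initial datum can exceed $s_1$; since $\sup Q_{c_s}=Q_{c_s}(-\infty)=s_1$ no translate of $Q_{c_s}$ can sit above $u$, so the comparison never starts. What makes the comparison work in the paper is a pair of ingredients you omit: the exponential estimate of Lemma \ref{lem:middle-est}, which controls $|u(r(t)+H(t),t)-s_1|\leq M'e^{-\delta t}$ on the lateral boundary of the domain $\{x\geq r(t)+H(t)\}$, and the Fife--McLeod type perturbed barriers $\overline V(x,t)=\alpha_1(t)V_{c_s}(x-c_st-X^0(t))$ (and the analogous $\underline V$) with $\alpha_1(t)=1+\alpha^0_1e^{-\delta_1 t}\to 1$, whose multiplicative cushion above $s_1$ dominates the lateral overshoot while the shift $X^0(t)$ converges. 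Without this machinery the $O(1)$ bound on $r(t)-c_st$, which your Part 2 presupposes, is not established. The same perturbed barriers also control the tails and are what allows uniform convergence on the entire expanding window $[H(t),\infty)$, not only on bounded windows.

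Second, your treatment of the critical case $c_s=c_z$ is too optimistic when you say ``the same compactness argument delivers the stated convergence''. In that case $r(t)-c_st$ is not bounded (only $o(t)$), so there is no plateau behind the front, the exponential estimate at $r(t)+H(t)$ fails, and the barriers disappear altogether. The paper therefore abandons the barrier strategy and runs a genuinely different argument: it works with the $s_*$- and $s^*$-level sets $\chi_*(t)$, $\chi^*(t)$, proves $d(t)=\chi_*(t)-\chi^*(t)\to\infty$ yet $d(t)=o(t)$ (Lemmas \ref{lem:dht-to-infty} and \ref{lem:dht-o}, where the $c_s\pm\varepsilon$ squeeze you mention appears), renormalizes around $\chi_*(t_n)$ rather than $r(t_n)$, and then identifies the entire-solution limit through a Pol\'{a}\v{c}ik-type argument (Lemma \ref{lem:V==Q}) that combines the zero-number principle with the phase-plane classification of all four trajectory types emanating from $R_1$ at speed $c_s$. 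That classification step is essential to rule out the other possible limits (including non-monotone or $\varphi_1$-crossing ones) and is not captured by the compactness argument of cases (i)--(iii). Your proposal correctly anticipates that (iv) is the hard case, but it does not supply the mechanism that actually pins down the limit.
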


When the big spreading  happens, we have further description of $u$ in the range where $u\in (s_1, 1)$, which are stated as the following.

\begin{thm}\label{thm3}
Assume \eqref{ass-A} and \eqref{ass-f}. Let $u(x,t)$ be a big spreading solution.
\begin{enumerate}[{\rm (i)}]
\item When $c_s<c_z$, there exist constants $l^*_1, r^*_1\in\mathbb{R}$ such that
\begin{align}\label{b1-profile-lr}
	&\lim_{t\to\infty}\sup_{x\in[-c_b t,+\infty)}|u(x+c_b t,t)-Q_{c_b}(x -r^*_1)|=0, \ \
	\lim_{t\to\infty}[r(t)-c_bt]=r^*_1, \\	
	&\lim_{t\to\infty}\sup_{x\in(-\infty,c_b t]}|u(x-c_b t,t)-Q_{c_b}(l^*_1-x)|=0, \ \
	\lim_{t\to\infty}[l(t)+c_bt]=l^*_1.
\end{align}	

\item When $c_s >c_z$, there exist $l^*_2, r^*_2\in \R$ such that, for any $c\in(c_z,c_s)$, as $t\to \infty$,  there holds
\begin{align}
\sup_{x\in[-c_z t ,(c-c_z)t ]}|u(x+c_z t,t)-Q_{c_z}(x- r^*_2)| \to 0, \qquad 	
\sup_{x\in[(c_z -c)t,c_z t]}|u(x-c_z t,t)-Q_{c_z}(l^*_2 - x)| \to 0.\label{b2-profile-l}
\end{align}

\item
When $c_s =c_z$, there exist $\theta_\pm (t)$ with $\theta'_\pm (t)\to 0\ (t\to \infty)$ such that
\begin{equation}\label{cs=cz=upper}
u(x+c_z t +\theta_+(t), t)\to Q_{c_z}(x),\qquad
u(x-c_z t -\theta_-(t), t)\to Q_{c_z}(-x)\quad  \mbox{\ \ as\ \ }t\to \infty,
\end{equation}
in the topology of $C^{2}_{loc}(\R)$.

\end{enumerate}
\end{thm}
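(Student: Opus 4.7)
The plan is to treat the three regimes separately, with three common tools: the generalized zero-number (intersection) argument for degenerate equations developed in \cite{LZ}, traveling-wave-based sub- and supersolutions, and the Darcy's law for the free boundary. Throughout, Theorem~\ref{thm2} already controls the free boundaries when $c_s \ne c_z$, so only the interior profile has to be identified.

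For case (i), $c_s < c_z$, the phase-plane analysis announced for the last section should produce the big sharp wave $Q_{c_b}$; one expects $c_b \in (c_z,\infty)$ from the monostable-bistable structure of $f$ and the condition $\int_{s_1}^1 A'(s)f(s)\,ds > 0$. I would build translated big-sharp-wave pairs $Q_{c_b}(x - c_b t - \xi^{\pm}(t))$ as super- and subsolutions on the right half-axis (valid by Darcy's law at the free boundary) and use the zero-number inequality between $u(\cdot,t)$ and the family $Q_{c_b}(\cdot - \xi)$ to pin the shift, forcing $\xi^{+}(t)-\xi^{-}(t) \to 0$ and identifying the limit $r^{*}_1$. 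This gives the right-front statement in \eqref{b1-profile-lr}; the left-front statement follows by spatial reflection.

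For case (ii), $c_s > c_z$, Theorem~\ref{thm2}(iii) already pins the free boundary to $c_s t + O(1)$ with a small-sharp-wave profile near it, so the question is the intermediate zone. I would track a level curve $\{u = s_1 + \eta\}$ for fixed small $\eta > 0$: since $u\to 1$ on compacta and $u$ passes through the small sharp wave near the free boundary, this level curve lies near $c_z t$. Interior regularity of the pressure $v = \Lambda(u)$ in the range where $u$ stays bounded away from $0$ yields $C^{2}_{loc}$ precompactness of the translates $u(\cdot + X_\eta(t), t)$; their $\omega$-limit set consists of bounded entire solutions with limits $1$ at $-\infty$ and $s_1$ at $+\infty$, and the zero-number bound on $u - Q_{c_z}(\cdot -\xi)$ together with uniqueness (up to shift) of the classical wave $Q_{c_z}$ forces convergence to $Q_{c_z}$ with a finite limit shift $r^{*}_2$. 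Squeezing between $s_1\pm\varepsilon$ for $x$ in the interior plateau $[c_z t + M, c_s t - M]$ then gives \eqref{b2-profile-l}.

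Case (iii), $c_s = c_z$, is the most delicate and will be the main obstacle. The two admissible wave speeds coincide, so the phase shift need not converge and one can only aim at $\theta_\pm'(t) \to 0$. I would fix the definition of $\theta_+(t)$ by pinning the level set $\{u = s_1 + \eta\}$ to $c_z t + \theta_+(t)$, and first show precompactness of $\{u(\cdot + c_z t + \theta_+(t), t)\}_{t\ge 1}$ in $C^{2}_{loc}(\R)$ via uniform interior estimates on the pressure in the non-degenerate range of $u$. Any $\omega$-limit function is an entire solution with the correct boundary behavior at $\pm\infty$ (the behavior at $+\infty$ coming from the small-sharp-wave analysis of Theorem~\ref{thm2}, whose free-boundary information must now be extracted in the critical regime via the weaker $r(t) = c_s t + o(t)$), and is forced to be $Q_{c_z}$ by zero-number arguments plus the Polá\v{c}ik-type classification in \cite{Polacik-MAMS, LZ}. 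The asymptotic slowness $\theta_\pm'(t) \to 0$ will follow from uniform continuity in time of the translated orbit: since $u(\cdot + c_z t + \theta_+(t), t+h) - u(\cdot + c_z t + \theta_+(t), t) \to 0$ uniformly on compacta and locally uniformly in $h$, the continuous dependence on shift for $Q_{c_z}$ forces $\theta_+(t+h) - \theta_+(t) = o(1)$ uniformly in $h$ on compacta. The twin difficulties in this case are establishing the $C^{2}_{loc}$ (as opposed to merely $C^{0}_{loc}$) convergence under the degenerate diffusion, and ruling out fast oscillations in $\theta_\pm$ beyond the $o(t)$ bound that is immediate from $r(t) = c_s t + o(t)$.
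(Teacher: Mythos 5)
Your overall strategy is the right one for each regime, but there are two concrete errors that need flagging, plus a place where you diverge from the paper's route in a way worth noting.

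\textbf{The speed of the big sharp wave.} You expect $c_b\in(c_z,\infty)$, but the paper's phase-plane construction (Proposition~\ref{prop:TWs}(iii), established in Proposition~\ref{prop:last}(iii)) gives $c_b\in(c_s,c_z)$. The heuristic is the usual terrace picture: when the faster upper wave ($1\to s_1$, speed $c_z$) catches the slower lower wave ($s_1\to 0$, speed $c_s$), they merge into a single front whose speed is \emph{dragged down below} $c_z$ by the slow tail, landing strictly between $c_s$ and $c_z$. Your claim has the inequality reversed.

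\textbf{Free-boundary control in case (i).} Your opening remark that ``Theorem~\ref{thm2} already controls the free boundaries when $c_s\neq c_z$'' is wrong for the regime of part (i). Theorem~\ref{thm2}(iii) is stated only for $c_s>c_z$; in the big-spreading case with $c_s<c_z$ none of (i)--(iii) of Theorem~\ref{thm2} applies, and neither does the latter part (which requires $c_s=c_z$). Hence $\lim_{t\to\infty}[r(t)-c_bt]=r_1^*$ must be established directly inside the proof of (i); fortunately, your actual construction there (translated big-sharp-wave pairs plus the zero-number argument) does exactly this, so this is a framing error rather than a gap in your case (i) argument. The paper obtains it by first proving an exponential decay estimate $u\to 1$ in a middle region (the analogue of Lemma~\ref{lem:middle-est}) and then reusing the sub-/supersolution scheme of Section~4 with $Q_{c_b}$.

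\textbf{Case (ii): a genuinely different route.} The paper constructs Fife--McLeod sub- and supersolutions for $Q_{c_z}$ exactly as in Section~4 (noting the equation is non-degenerate on $[s_1,1]$); you instead propose a level-set-tracking, renormalization and $\omega$-limit classification argument of Pol\'{a}\v{c}ik type. Both work: the FM sub/super route gives exponential convergence of the shift for free, while your route is more flexible and aligns better with the machinery the paper already develops for the critical case $c_s=c_z$. Your case (iii) argument essentially matches the paper's (pin the level set, extract entire-solution limits, classify via zero-number, then derive $\theta'_\pm\to 0$), though the paper's derivation of $\theta'_+\to 0$ is cleaner: it differentiates $u(\chi^*(s),s)\equiv s^*$ and uses the $C^2_{loc}$ convergence of $u_x$ and $u_t$ to conclude $(\chi^*)'(s)\to c_z$, whereas your uniform-continuity-in-time argument is vaguer and needs more care. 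Finally, the $C^2_{loc}$ compactness you worry about in (iii) is not a serious obstacle here: near the tracked level set, $u$ is bounded away from $0$ (thanks to the separating level sets in Lemma~\ref{lem:dht-to-infty}), so the equation is uniformly parabolic there and standard interior estimates for the pressure apply.
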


The rest of this paper is organized as follows. In Section 2 we state some preliminary results including the properties of weak solutions, classification of nonnegative stationary solutions, traveling waves. The proof of trichotomy results in Theorem \ref{thm1} is shown in Section 3. In Section 4, we will prove Theorem \ref{thm2}, that is, to use the small sharp wave $Q_{c_s}(x-c_s t)$ to characterize the solutions in the range $u\in (0,s_1)$. Section 5 is devoted to prove Theorem \ref{thm3}, that is, to use the traveling waves $Q_{c_z}(x-c_z t)$ and $Q_{c_b}(x-c_b t)$ to characterize the solutions in the range $u\in (s_1,1)$.
In the last section, we provide the detailed construction of stationary solutions and traveling waves by using the phase plane analysis.


\section{Preliminaries}
In this section, we give some preliminary results, including some properties of weak solutions, nonnegative stationary solutions and traveling waves. We also give an analogue convergence result for bounded solutions as in \cite{LZ} for RPMEs.

\subsection{Basic results}
For a very weak solution $u(x,t)$ of \eqref{p} with initial data taking from $\mathcal{I}(b)$, we have the following facts.
\begin{enumerate}[(a).]
\item {\it Generalized pressure function}. As in PMEs, we use not only the function $u(x,t)$ but also the function $v(x,t)$ defined by \eqref{def-pressure}. In the PMEs, $u$ corresponds to the density and $v$ corresponds to the pressure of a gas, respectively, so we also call $v$ a generalized pressure function. Denote
\begin{equation}\label{u-to-v}
\lambda(v) := \Lambda^{-1} (v) \Leftrightarrow \Lambda(\lambda(v))\equiv v,\quad B(v) := A'(\lambda(v)),\quad h(v):= \frac{f(\lambda(v))}{\lambda'(v)},\quad v>0.
\end{equation}
We make the supplementary definitions: $\Lambda (0)=\lambda (0)=0$ and $h(0)=0$. Then
\begin{equation}\label{prop-B-h}
\left\{
\begin{array}{l}
B(0)=0,\quad B'(0+0)=A_*,\quad B= \frac{\lambda(v)}{\lambda'(v)} \in C^1([0,\infty)),\quad B'(v)>0\mbox{ for }v>0,\\
h'(0+0)= f'(0)A_*,\quad h\in C^1([0,\infty))
\end{array}
\right.
\end{equation}
where $A_*$ is the limit in \eqref{ass-A} (see the detailed proof in the last section). By using $v$ we can rewrite the problem \eqref{p} as
\begin{equation}\label{p-v}
\left\{
\begin{array}{ll}
v_t = B(v) v_{xx} + v^2_x + h(v),&  x\in \R,\ t>0,\\
v(x,0)= v_0(x) := \Lambda(u_0(x)), & x\in \R.
\end{array}
\right.
\end{equation}

\item {\it Free boundaries}. A right free boundary $r(t)$ and a left one $l(t)$ appear in the solution,  $u(x,t)$ is positive in $(l(t),r(t))$ and is zero outside this interval.
\item {\it Waiting time and Darcy's law}. The right/left free boundary has a waiting time $t_*(b)/t_*(-b)\in [0,\infty]$, and $t_*(b)=0$ if $v'_0(b-0)<0$. After the waiting time, the free boundaries satisfy the Darcy's law:
\begin{equation}\label{general-sol-Darcy}
r'(t) = - v_x  (r(t)-0,t) \mbox{\ for\ } t>t_*(b),\qquad l'(t) = - v_x (l(t)+0,t)\mbox{ for } t>t_*(-b).
\end{equation}
By \cite[Proposition A.8]{LZ}, for any $t\geq t_*(b)+1$, $v(x,t)$ is Lipschitz in $x$ 
with Lipschitz number $L$ independent of $t$. So, $|v(r(t),t)-v(r(t)-y,t)|\leq L y$ for $y\in [0,1]$. This implies that 
\begin{equation}\label{r'l'-bound}
|r'(t)|=|v_x(r(t)-0,t)|\leq L \mbox{ for } t\geq t_*(b)+1.\quad \mbox{(Similarly,\ \  }
|l'(t)|\leq L' \mbox{ for } t\geq t_*(-b)+1.)
\end{equation}

Sometimes we need to consider a solution in a moving frame. For example, let $u(x,t)$ be a solution of \eqref{p} with free boundaries $l(t)<r(t)$, we may consider the function $\tilde{u}(x,t):= u(x-\beta t,t)$
with free boundaries $\tilde{l}(t):= l(t)+\beta t$, $\tilde{r}(t):= r(t) +\beta t$. Then $\tilde{u}$ satisfies
\begin{equation}\label{tilde-u-eq}
\tilde{u}_t = [A(\tilde{u})]_{xx} -\beta \tilde{u}_x + f(\tilde{u}),\qquad x\in \R,\ t>0,
\end{equation}
whose free boundaries satisfy the following modified Darcy's law:
\begin{equation}\label{tilde-Darcy}
\left\{
\begin{array}{l}
\tilde{r}'(t) = - [\Lambda(\tilde{u})]_x  (\tilde{r}(t)-0,0) +\beta \mbox{\ \ for\ \ } t>t_*(b),\\ \tilde{l}'(t) = - [\Lambda(\tilde{u})]_x (\tilde{l}(t)+0,0)+\beta \mbox{\ \  for\ \  } t>t_*(-b).
\end{array}
\right.
\end{equation}

\item {\it Positivity persistence and regularity}. For any $x_1\in \R$, once $u(x_1,t_1)>0$ for some $t_1\geq 0$, then $u(x_1,t)>0$ for all $t\geq t_1$, $u(x,t)$ is classical in a neighborhood of $(x_1,t_1)$ if $u(x_1,t_1)>0$.

In addition, we have the following locally uniform H\"{o}lder estimates (cf. \cite{DB-F0, DB-F} and \cite[Proposition A.8]{LZ}): for any $\tau >0$ and any compact domain $D\subset \R\times (\tau, \infty)$, there are positive constants $C$ and $\alpha$, both depending only on $A$ and $\tau$, such that
\begin{equation}\label{v-Holder-est}
\|v\|_{C^{\alpha,\frac{\alpha}{2}}(D)}\leq C.
\end{equation}

\item {\it Monotonicity outside the initial domain}. The solution is monotone in the following sense
$$
u_x(x,t)<0 \mbox{ for } b<x<r(t),\ t>t_*(b),\qquad
u_x(x,t)>0 \mbox{ for } l(t)<x<-b,\ t>t_*(-b).
$$
\end{enumerate}
The properties (a)-(d) can be found or be derived in a similar way as in the standard theory of PMEs, see, for example \cite{Knerr, Vaz-book,  WuYin-book}) and references therein. The last monotonicity property was proved in \cite{LZ} for RPMEs, which clearly remains true for the solutions of the problem \eqref{p}.

\subsection{Nonnegative stationary solutions}
We list all nonnegative stationary solutions of the equation in \eqref{p} (the detailed construction for these solutions is given in the last section by using a phase plane).
Such a solution means a function $U\in C(\R)$ with $U(x)\geq 0$ and it satisfies $[A(U)]''+f(U)=0$ in the weak sense. In addition, $U(x)>0$ in some interval $I$ where $U$ is actually a $C^2$ function. Some of them will be used as the $\omega$-limit of the solution of \eqref{p}, or used for comparison.
Using a phase plane analysis we have the following situations (see details in the last section, see also \cite{A2, AW1}):
\begin{enumerate}
\item[{\bf Case~A.}]~{\it Constant solutions}: $U_{q}(x)\equiv q$ in $\R$, where $q\in \{0, s_{1}, s_{2}, 1\}$.

\item[{\bf Case B.}]~{\it Ground state solution $U^*\in C^2(\R)$}: $U^* (0)=\theta$ for $\theta\in (s_2, 1)$ satisfying $\int_{s_1}^\theta A'(r)f(r)dr=0$, $U^*(\pm \infty)=s_1$, $U^*(x)=U^* (-x)$ and $(U^*)'(x)<0$ for all $x>0$.

\item[{\bf Case C.}] {\it Solutions with compact supports}. There are two kinds of such solutions.
First, for any $q_{1}\in (0,s_{1})$, there is a unique even stationary solution $U_{q_{1}} \in C^2((-L_1,L_1))$ and $A(U_{q_1}) \in C^1([-L_1, L_1])$ for some $L_1>0$, and
$$
U_{q_1}(0)=q_1,\quad U_{q_1}(\pm L_1)= U'_{q_1}(0)=0, \quad U'_{q_1}(x)<0 \mbox{ for }0<x< L_1, \quad [A(U_{q_1})]'(L_1-0) \in (-\infty,0).
$$
Under the assumption $f'(0)>0$ we even have $L_1\to 0$ as $q_1\to 0$.
Next, for any $q_{2}\in (\theta,1)$ there is a unique even stationary solution $U_{q_{2}}(x)$ having similar properties as $U_{q_1}$, with $L_1$ replaced by some $L_2>0$.
We call $U_{q_1}$/$U_{q_2}$ a short/high stationary solution, respectively.

\item[{\bf Case D.}]~{\it Monotonic solutions}: $U_{1}^{\pm}(\cdot)$ with $U_1^+(\cdot) = U^-_1(-\cdot)\in C^2((0,\infty))\cap C(\R)$, and
$$
U_{1}^{+}(0)=0,\quad [A(U^+_1)]'(0+0)\in (0,\infty),\quad (U_{1}^{+})'(x)>0~\text{for}~x>0\mbox{ and } U_{1}^{+}(+\infty)=s_1;
$$
$U_2^{\pm}(x)$ with $U_2^+(x)=U^-_2(-x)$ has similar properties as $U^+_1$, but $U_2^+(+\infty)=1$.

\item[{\bf Case E.}]~{\it Periodic solutions}: for any $q\in (s_1, s_2)$, there exists a unique periodic solution $U_{per}(x)\in C^2(\R)$ with minimum $q$ and maximum in $(s_2,\theta)$.
\end{enumerate}

\begin{remark}\label{ss-eq-p}\rm
We must pay special attention to distinguish two concepts: {\it a stationary solution of the equation in its support $I\subset \R$} and {\it a stationary solution of the Cauchy problem \eqref{p}}. The former means a function $U(x)>0$ in some interval and satisfies $[A(U)]''+f(U)=0$ in a classical sense. The latter means a function $U(x)\geq 0$ such that the solution of \eqref{p} with initial data $U$ remains to be $U$ itself for all the time. This is equivalent to say that the waiting times of $U(x)$ on its free boundaries (if it has) are positive. Hence, the solutions in Cases C and D are not stationary ones of the Cauchy problem since their waiting times on the free boundaries are zero.
\end{remark}

\subsection{Traveling waves}
Using the phase plane analysis as in \cite{A2, AW1}, we can give the traveling waves and sharp waves of the equation in \eqref{p} as the following. The detailed construction is given in the last section by using a phase plane analysis, which is much more complicated than the phase plane analysis for reaction diffusion equations and porous media equations due to the general degenerate term $A(u)$.

\begin{prop}\label{prop:TWs}
Assume \eqref{ass-A} and \eqref{ass-f}. Then there exist positive constants $c_s>0$ and $c_z>0$ such that the equation \eqref{trav-equ} has traveling waves as the following.

\begin{enumerate}[{\rm (i)}]
\item It has a sharp wave $Q_{c_s}(x-c_s t)$, with $Q_{c_s}(-\infty)=s_1,\ Q'_{c_s}(\xi)<0$ for $\xi<0$, $Q_{c_s}(\xi)=0$ for $\xi\geq 0$, and $c_s=-[\Lambda(Q_{c_s})]'(0-0)$.
\item It has a unique traveling wave $Q_{c_z}(x-c_z t)$, with $Q_{c_z}(-\infty)=1,\ Q_{c_z}(+\infty)=s_1,\ Q'_{c_z}(\xi)<0$ for $\xi\in \R$;
\item In the case where $c_s<c_z$, there exists $c_b \in (c_s, c_z)$ such that the equation has a sharp wave $Q_{c_b}(x-c_b t)$, with $Q_{c_b}(-\infty)=1,\ Q'_{c_b}(\xi)<0$ for $\xi<0$, $Q_{c_b}(\xi)=0$ for $\xi\geq 0$, and $c_b=-[\Lambda(Q_{c_b})]'(0-0)$.
\end{enumerate}	
\end{prop}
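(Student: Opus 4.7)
The approach is a phase-plane analysis in the pressure variable, generalizing the one used for the RPME in \cite{A2}. Seeking a rightward wave $u(x,t)=Q_c(x-ct)$ with $Q_c>0$ on $(-\infty,\xi_0)$ (with $\xi_0=0$ in the sharp case), the equation on $\{Q_c>0\}$ reduces to
\begin{equation*}
[A(Q_c)]'' + c\, Q_c' + f(Q_c) = 0.
\end{equation*}
Setting $v(\xi)=\Lambda(Q_c(\xi))$ and regarding $w=v'$ as a function of $v$ gives, for $w<0$,
\begin{equation*}
\frac{dw}{dv} = -\frac{w^2 + c w + h(v)}{B(v)\, w},
\end{equation*}
with $B,h$ as in~\eqref{u-to-v}. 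In the half-plane $\{w<0\}$ the associated planar flow has equilibria at $(\Lambda(q),0)$ for zeros $q$ of $f$; since $h'(\Lambda(q))=f'(q)$, the points $(\Lambda(s_1),0)$ and $(\Lambda(1),0)$ are saddles. The line $v=0$ is a degenerate singular set for which the local ansatz $w(0^+)=:w_0$ forces $w_0(w_0+c)=0$, isolating the classical tail $w_0=0$ and the sharp-wave branch $w_0=-c$ matching the Darcy condition.

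For part (i) I follow the unstable manifold $\Gamma_c^1$ of $(\Lambda(s_1),0)$ that enters $\{v<\Lambda(s_1),\,w<0\}$ and track its exit value at $v=0$ as $c>0$ varies. Since two curves $\Gamma_c^1$ with different $c$ cannot cross in this strip (the $c$-derivative of the right-hand side of the ODE has constant sign for $w<0$), the exit value depends monotonically on $c$. A shooting argument in $c$ then produces a unique critical $c_s>0$ at which $\Gamma_{c_s}^1$ reaches $v=0$ with $w=-c_s$, yielding the small sharp wave; for $c<c_s$ the trajectory crosses $w=-c$ strictly before $v=0$ (non-monotone profile), and for $c>c_s$ it enters with $w_0=0$, giving a classical non-sharp wave.

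For part (ii) I use a bistable shooting between the two saddles. Tracking the lower unstable manifold $\Gamma_c^2$ of $(\Lambda(1),0)$, the usual dichotomy applies: for some range of $c$ it re-crosses $w=0$ at a value $v\in(\Lambda(s_1),\Lambda(1))$, and for the complementary range it slips past $(\Lambda(s_1),0)$ and continues into $\{v<\Lambda(s_1)\}$. Monotonicity in $c$ singles out a unique $c_z$ for which $\Gamma_{c_z}^2$ matches the incoming stable manifold of $(\Lambda(s_1),0)$, giving the bistable wave. Multiplying the traveling-wave equation by $A'(Q_c)Q_c'$ and integrating over $\R$ produces
\begin{equation*}
c_z\int_\R A'(Q_{c_z})\,(Q_{c_z}')^2\,d\xi = \int_{s_1}^1 A'(r)f(r)\,dr,
\end{equation*}
so the positivity assumption on this integral in \eqref{ass-f} forces $c_z>0$.

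For part (iii), assuming $c_s<c_z$, I revisit $\{\Gamma_c^2\}$ for $c\in(c_s,c_z)$. For $c$ just below $c_z$, $\Gamma_c^2$ overshoots $(\Lambda(s_1),0)$, descends into $\{0<v<\Lambda(s_1),\,w<0\}$ and reaches $v=0$ at some $w=\omega(c)$. A comparison with $\Gamma_{c_s}^1$ (which acts as a barrier at the level $c=c_s$) gives $\omega(c)<-c$ for $c$ near $c_s$, while continuity at $c_z$ gives $\omega(c)>-c$ for $c$ close to $c_z$. The intermediate value theorem applied to the continuous function $c\mapsto\omega(c)+c$ then yields $c_b\in(c_s,c_z)$ with $\omega(c_b)=-c_b$, and uniqueness again follows from non-crossing. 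The main obstacle will be the local analysis at the degenerate boundary $v=0$: showing that the selected trajectories actually reach $v=0$ in finite $\xi$-time (so the resulting $Q_c$ is compactly supported), that the limit $w(0^+;c)$ exists, and that the shooting maps are continuous and strictly monotone up to each critical value. Carrying out these near-$v=0$ asymptotics for the full class $\mathcal{A}$—rather than the pure power $A(u)=u^m$—using only the information $B(v)\sim A_* v$, $h(v)\sim A_* f'(0)v$ as $v\to 0^+$ from \eqref{prop-B-h}, is the technical heart of the construction.
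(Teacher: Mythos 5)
Your phase-plane shooting argument in the pressure variable---identifying the Darcy branch $w_0=-c$ at $v=0$, tracking the unstable manifold of the saddle $(\Lambda(s_1),0)$ (resp.\ $(\Lambda(1),0)$), using non-crossing to get monotonicity of the exit value in $c$, and selecting $c_s$ and $c_b$ by the intermediate value theorem---is essentially the route the paper takes in Section~6 (Lemma \ref{lem:B-h}, Lemma \ref{lem:3-TW} and Proposition \ref{prop:last}). The technical issue you flag at the degenerate boundary $v=0$ is precisely what Lemma \ref{lem:B-h} supplies: it shows $B,h\in C^1([0,\infty))$ with $B'(0+0)=A_*$ and $h'(0+0)=f'(0)A_*$, after which the time rescaling $d\zeta/d\xi=B(\varphi)$ yields a regular planar system with isolated singular points $R_0=(0,0)$ and $R_4=(0,-c)$, making your near-boundary shooting claims rigorous for the full class $\mathcal{A}$.
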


\subsection{General convergence result}
Our problem \eqref{p} has only very weak solutions and they are classical only when they are positive. So it is convenient to define $\omega$-limit $w(x)$ of a solution $u(x,t)$ as the following:
\begin{equation}\label{def-conv}
\lim\limits_{n\to \infty} u(x,t_n)= w(x) \mbox{ means }
\left\{
 \begin{array}{l}
 u(x,t_n)\to w(x) \mbox{ in } C_{loc}(\R) \mbox{ topology};\\
 u(x,t_n)\to w(x) \mbox{ in } C^{2}_{loc}(J) \mbox{ topology if } w(x)>0 \mbox{ in }J.
 \end{array}
 \right.
\end{equation}
Recently, Lou and Zhou \cite[Theorem 1.1]{LZ} proved a general convergence result for RPMEs, which says that any bounded solution of a RPME starting at a compactly supported initial data converges in the sense of \eqref{def-conv} to a stationary solution of the Cauchy problem. Their proof depends essentially on the intersection number diminishing properties in the PMEs, as in the general convergence result for reaction diffusion equation in \cite{DM}. For our equation \eqref{tilde-u-eq} with initial data in $\mathcal{I}(b)$, we have the following analogue general convergence result.

\begin{thm}\label{thm:general-conv}
Assume \eqref{ass-A}, $f\in C^2([0,\infty)),\ f(0)=0$ and $\beta\in \R$. Let $\tilde{u}(x,t)$ be a bounded, nonnegative, time-global solution of \eqref{tilde-u-eq} with initial data in $\mathcal{I}(b)$. Then $\tilde{u}(\cdot,t)$ converges as $t\to \infty$, in the sense of \eqref{def-conv}, to a nonnegative stationary solution $\tilde{w}(x)$ of Cauchy problem of \eqref{tilde-u-eq}. More precisely,
\begin{enumerate}[{\rm (i)}]
\item when $\beta\not= 0$, $\tilde{w}$ is either $0$, or a positive function on $\R$, or a stationary solution of \eqref{tilde-u-eq} which is positive on the half line and has a boundary. In addition, in the last case, it has only one left boundary $l$ when $\beta>0$ where $[\Lambda(\tilde{w})]_x(l+0)=\beta$,
    it has only one right boundary $r$ when $\beta<0$ where $[\Lambda(\tilde{w})]_x(r-0)=\beta$;

\item when $\beta=0$, $\tilde{w}$ is either a nonnegative zero of $f$, or a ground state solution $U^*(x-x_0)$ for some $x_0\in [-b,b]$ with $U^*(\pm \infty)>0$ as in Case B in subsection 2.2, or in the case where $f(u)<0$ for $0<u\ll 1$ and
\begin{equation}\label{type-1}
\lim\limits_{u\to 0} \frac{-f(u)}{A''(u) u^2} =0,
\end{equation}
$\tilde{w}$ might be a ground state solution $U^{**}(x-x_1)$ as in Case B, but with $U^{**}(\pm \infty)=0$.
\end{enumerate}
\end{thm}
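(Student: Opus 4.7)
My plan is to adapt, in the more general setting of $A\in\mathcal{A}$ and a drift $-\beta\tilde u_x$, the intersection-number/Sturmian strategy that Lou--Zhou employed in \cite{LZ} for the reaction porous medium equation. The central tool is the zero-number (intersection number) comparison between $\tilde u(\cdot,t)$ and a stationary solution $\phi$ of the Cauchy problem associated with \eqref{tilde-u-eq}: the number $Z[\tilde u(\cdot,t)-\phi]$ of sign changes on $\R$ is finite, non-increasing in $t$, and decreases strictly whenever $\tilde u(\cdot,t_0)-\phi$ has a multiple zero. Because $\tilde u$ is a weak solution that degenerates at its free boundaries, this tool has to be set up in the generalized sense used for the RPME, after passing to the pressure variable $\tilde v=\Lambda(\tilde u)$, in which \eqref{p-v} becomes a (non-uniformly) parabolic equation with smooth coefficients away from $\tilde v=0$; inside $\{\tilde v>0\}$ the classical Angenent-type argument applies, and across the free boundary one uses that $\tilde v_x$ has one-sided limits (Darcy's law) to extend the count. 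Establishing this generalized intersection-number principle for the class $\mathcal{A}$ (not just $A(u)=u^m$) will be the main technical obstacle, but the properties \eqref{prop-B-h} of $B$ and $h$ make the adaptation of the RPME argument essentially mechanical.

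With the zero-number tool in hand, the compactness part is standard. The uniform H\"older estimate \eqref{v-Holder-est} and the bound \eqref{r'l'-bound} on the free-boundary speeds give relative compactness of $\{\tilde v(\cdot,t)\}_{t\geq 1}$ in $C_{loc}(\R)$; interior parabolic regularity inside $\{\tilde v>0\}$ yields $C^{2}_{loc}$ convergence on any interval on which the limit is positive. Passing to the limit in the weak formulation shows that every element $\tilde w$ of the $\omega$-limit set is a weak stationary solution of \eqref{tilde-u-eq} and, by property (e) of Subsection 2.1 transported to the limit, is monotone outside $[-b,b]$.

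I would next classify the possible $\omega$-limits using the same phase-plane machinery that produces the traveling waves in Proposition \ref{prop:TWs} and the stationary solutions of Subsection 2.2. A stationary solution of \eqref{tilde-u-eq} is exactly a traveling wave of \eqref{p} with speed $\beta$, so when $\beta\neq 0$ the Darcy condition at any free boundary must read $[\Lambda(\tilde w)]_x=\beta$; since the pressure is non-increasing across a right-boundary and non-decreasing across a left-boundary, a right free boundary can occur only when $\beta<0$ and a left one only when $\beta>0$, which gives the one-sided free-boundary statement of (i). When $\beta=0$ the phase portrait (Cases A--E) together with the compactness of the support from below in $x\to-\infty$ (the solution started in $[-b,b]$) and the exclusion of periodic orbits (which would force $\tilde w$ to be sign-changing or to stay away from $0$) reduce the admissible limits to the list in (ii), with the ground state $U^*$ arising from the separatrix connecting $(s_1,0)$ to itself and the exceptional $U^{**}$ arising only under condition \eqref{type-1} that rules out a positive waiting time at $\tilde w=0$.

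Finally, to upgrade subsequential convergence to full convergence I would apply the zero-number principle with the family of stationary solutions as test functions, in the standard Matano--Polacik scheme: if two distinct elements $\tilde w_1\neq\tilde w_2$ lay in the $\omega$-limit set, one could find a member $\phi$ of a continuous family of stationary solutions (translations of $U^*$ or $U^{**}$, translations of monotone/compactly supported solutions of Cases C--D, or the constant solutions of Case A) such that $\tilde w_1-\phi$ and $\tilde w_2-\phi$ have different, both simple, zero-counts, contradicting the finiteness and monotonicity of $Z[\tilde u(\cdot,t)-\phi]$. The chief delicate point is, once again, that all intersections with $\phi$ that occur at the common free boundary have to be counted with the correct multiplicity; this is handled exactly as in \cite[Sections 3--4]{LZ} by locally extending $\phi$ past its free boundary through $[\Lambda(\phi)]_x=\beta$ and then invoking the interior Sturmian lemma for the pressure equation.
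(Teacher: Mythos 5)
Your proposal is correct and follows essentially the same route the paper takes: the authors' own proof of this theorem is a one-paragraph reduction to Lou--Zhou \cite{LZ}, asserting that the intersection-number diminishing properties extend to the drift equation \eqref{tilde-u-eq} (with the modified Darcy law \eqref{tilde-Darcy}) and then citing the argument of \cite[Theorem 1.1]{LZ}. Your roadmap---pressure-variable Sturmian comparison including the free-boundary multiplicity bookkeeping, $C^\alpha$/$C^2_{loc}$ compactness, phase-plane classification with the Darcy condition $[\Lambda(\tilde w)]_x = \beta$ forcing a left boundary when $\beta>0$ and a right one when $\beta<0$, and the Matano--Pol\'{a}\v{c}ik style upgrade to full convergence using stationary solutions as test functions---is exactly the content being invoked, spelled out in more detail than the paper bothers to give. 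The one small imprecision is your gloss on condition \eqref{type-1}: it is not directly a waiting-time statement but rather (cf.\ Remark \ref{rem:ground-1-2}) the condition that excludes the Type~II "multi-bump" ground states while still permitting the Type~I ground state $U^{**}$ with $U^{**}(\pm\infty)=0$; this is a cosmetic matter and does not affect the soundness of the argument.
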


\begin{proof}
Note that the intersection number diminishing properties used in \cite{LZ} for RPMEs remain
hold for the equation \eqref{tilde-u-eq}, where the Darcy's law is replaced by the modified one \eqref{tilde-Darcy}, which does not affect the intersection number properties. So one can use a similar argument as showing \cite[Theorem 1.1]{LZ} to prove the theorem. We omit the details here.
\end{proof}

\begin{remark}\label{rem:ground-1-2}\rm
The solutions $U^*$ and $U^{**}$ in (ii) are similar as that in Case B in subsection 2.2. They are called Type I ground state solutions in the RPMEs (cf. \cite{LZ}). The additional assumption \eqref{type-1} (which is equivalent to $\alpha>m$ in the case where $A(u)=u^m$ for $m>1$ and $f(u)\sim -u^\alpha$ as $u\to 0$) is actually used to exclude the possibility of the following Type II ground state solutions. More precisely, if $f(u)<0$ for $0<u\ll 1$ and, instead of \eqref{type-1}, $f$ satisfies
\begin{equation}\label{type-2}
\liminf\limits_{u\to 0} \frac{-f(u)}{A''(u) u^{2+\delta}} >0
\end{equation}
for some $\delta>0$, (In the case where $A(u)=u^m$ and $f(u)\sim - u^\alpha$, this condition is true when $\alpha <m$.) then we may have the so-called Type II ground state solutions (cf. \cite{LZ}):  $U_* (x-z_1) + \cdots +U_*(x-z_k)$ for some positive integer $k$ and some $z_i\in \R \ (i=1,\cdots,k)$ with $z_i+2 L \leq z_{i+1}\ (i=1,2,\cdots, k-1)$, where $U_*$ is a positive, symmetrically decreasing stationary solution in the interval $(-L,L)$ for some $L>0$, and $U_*(x)=0$ for $|x|\geq L$, $[\Lambda(U_*)]'(\pm L)=0$.
\end{remark}


\section{Trichotomy Result}
In this section we prove the trichotomy result in Theorem \ref{thm1}.

\medskip
\noindent
{\it Proof of Theorem} \ref{thm1}. By the general convergence result Theorem \ref{thm:general-conv}, we know that, any solution $u_\sigma (x,t)$ of \eqref{p} with initial data $u_0=\sigma \phi \in \mathcal{I}(b)$ converges as $t\to \infty$ to a nonnegative stationary solution of the Cauchy problem \eqref{p}, in the sense of \eqref{def-conv}. Since, as we have mentioned in Remark \ref{ss-eq-p}, the stationary solutions in Cases C and D are not stationary ones of the Cauchy problem, to show the trichotomy result in Theorem \ref{thm1} we only need to show that constant solutions $0,s_2$ in Case A, periodic solutions in Case E are not in the $\omega$-limit set $\omega(u_\sigma)$ of the solution $u_\sigma$.

\medskip
\noindent
{\it Step 1. To exclude $0$ from $\omega(u_\sigma)$}. This indeed follows from the hair-trigger effect (cf. \cite{AW1,G,LZ}). More precisely, when we take $q_1\in (0,s_1)$ sufficiently small, then $\sigma \phi (x)>U_{q_1}(x+x_0)$ for some $x_0$, where $U_{q_1}$ is a stationary solution in Case C. Since $U_{q_1}(x+x_0)$ is a time-independent (very weak) subsolution of \eqref{p}, by comparison we have $u_\sigma (x,t)\geq U_{q_1}(x+x_0)$. So, $0\not\in \omega(u_\sigma)$.

\medskip
\noindent
{\it Step 2. To exclude $s_2$ from $\omega(u_\sigma)$}. Let $U_{per}(x)$ be a stationary solution in Case E. By the intersection number properties developed in \cite{LZ}, we see that the zero number of $u_\sigma (x,t) -U_{per}(x)$ is finite for any $t>0$. So its limit can not be $s_2-U_{per}(x)$, which has infinitely many zeros.

\medskip
\noindent
{\it Step 3. To exclude the periodic solutions $U_{per}$ from $\omega(u_\sigma)$}. The proof is similar as  that in the previous step. We only need to switch the roles of $s_2$ and $U_{per}(x)$.

\medskip
Next we prove the sharp threshold result. Set
$$
\Sigma_{w}:=\{\sigma>0~|~u_{\sigma}(x,t)\to w~\text{\rm as}~t\to\infty~\text{\rm in the topology of }~L^\infty_{loc}(\R)\},
$$
where $w$ is $s_1$ or $1$, and denote $\Sigma_* := (0,\infty)\backslash (\Sigma_{s_1}\cap \Sigma_1)$.

\medskip
\noindent
{\bf Claim 1}. The set $\Sigma_{s_1} = (0,\sigma_*)$ for some $\sigma_* \in (0,\infty]$.

\noindent
For any $\sigma_1$ satisfying $u_{\sigma_1}(x,t_1)< s_2$ for some $t_1\geq 0$, by the continuous dependence of $u_\sigma$ on $\sigma$ we have $u_\sigma(x,t_1)<s_2$ provided $|\sigma -\sigma_1|\ll 1$. By comparison we have $u_{\sigma}(x,t)< s_2$ for all $x\in \R$ and $t>t_1$. The above trichotomy result then implies that $\omega(u_\sigma)=\{s_1\}$. This shows that $\Sigma_{s_1}$ is a non-empty (since all small $\sigma>0$ with $\sigma \phi<s_2$ belongs to $\Sigma_{s_1}$) open interval $(0, \sigma_{*})$ for some $\sigma_* \in (0,\infty]$. ($\sigma_* = \infty$ is possible, which corresponds to the {\it complete vanishing} phenomena in \cite{LZ}.)

\medskip
\noindent
{\bf Claim 2}. If $\Sigma_1$ is not empty, then it is an open interval $(\sigma^*, \infty)$ for some $\sigma^* \in [\sigma_*,\infty)$.

\noindent
By comparison $\Sigma_1$ is an interval $(\sigma^*, \infty)$ or $[\sigma^*, \infty)$ for some $\sigma^* \geq \sigma_*$. Next, $\Sigma_1$ is open since when $\sigma_2 \in \Sigma_1$, $u_{\sigma_2}(x,t_2)>U_{q_2}(x)$ for  large $t_2$, where $q_2 \in (\theta, 1)$ and $U_{q_2}(x)$ is the stationary solution in Case C in subsection 2.2. By continuous dependence of $u_\sigma$ in $\sigma$, this inequality also holds for other $\sigma$ satisfying $|\sigma -\sigma_2|\ll 1$. This proves Claim 2.

\medskip
Based on these claims we can finish the proof of Theorem \ref{thm1}.  If $\sigma_*=\infty$, then only the small spreading  happens and the proof is completed.  If $\sigma_*<\infty$, then either $\Sigma_1=\emptyset$ and $\Sigma_* = [\sigma_*, \infty)$ or
$\Sigma_1\not=\emptyset$ and $\Sigma_* = [\sigma_*, \sigma^*]$. In any of these cases $\Sigma_*$ is non-empty. Using a similar argument as in proving (ii) and (iii) of \cite[Lemma 6.3]{LZ} (see also \cite{DL,DM} for analogous results for linear-diffusion equations) we conclude that $\Sigma_*$ is a singleton, that is, $\sigma_*=\sigma^*$.

This completes the proof of Theorem \ref{thm1}.
\qed


\section{Asymptotic Behavior of the Solutions in the Range $[0,s_1]$}
In this section we specify $u$ in the range $[0,s_1]$ by using the small sharp wave $Q_{c_s}(x-c_s t)$, and prove Theorem \ref{thm2}. 

\subsection{The former part of Theorem \ref{thm2}} When (i), (ii) or (iii) of Theorem \ref{thm2} holds, the function $H(t)$ is given explicitly. 
The key step is to construct sub- and super-solutions as in \cite{DL, DMZ,DQZ,G}. For this purpose, we need the estimates of $u(r(t)+H(t),t)$ and $u(l(t)-H(t),t)$, which are stated as the following.

\begin{lem}\label{lem:middle-est}
Under the hypotheses {\rm (i), (ii)} or {\rm (iii)} in Theorem \ref{thm2}, there exist $\delta, M', T>0$ such that
\begin{equation}\label{middle-est}
s_1 - M' e^{-\delta t} \leq u(r(t)+H(t),t),\ u(l(t)-H(t),t) \leq s_1 +M' e^{-\delta t},\qquad t\geq T.
\end{equation}
\end{lem}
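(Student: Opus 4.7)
The plan is to construct spatially constant super- and sub-solutions of the form $s_1\pm\eta_0 e^{-\delta t}$ (for $0<\delta<-f'(s_1)$ and $\eta_0>0$ small) and to apply parabolic comparison on a moving interval $[a(t),b(t)]$ that contains the target point $r(t)+H(t)$ for all $t$ sufficiently large. Using $f(s_1)=0$, $f'(s_1)<0$ and $f\in C^2$, Taylor expansion gives $f(s_1+\eta)=f'(s_1)\eta+O(\eta^2)$, so if $\delta<-f'(s_1)$ and $\eta_0$ is sufficiently small, $\overline{u}(t):=s_1+\eta_0 e^{-\delta t}$ satisfies
$$
\overline{u}_t-[A(\overline{u})]_{xx}-f(\overline{u})=-\delta\eta_0 e^{-\delta t}-f(s_1+\eta_0 e^{-\delta t})\geq \tfrac12(-\delta-f'(s_1))\eta_0 e^{-\delta t}>0,
$$
so $\overline{u}$ is a very weak supersolution of \eqref{p}. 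Symmetrically, $\underline{u}(t):=s_1-\eta_0 e^{-\delta t}$ is a very weak subsolution as long as it stays nonnegative. The conclusion of the lemma will follow with $M'=\eta_0 e^{\delta t_0}$, where $t_0$ is the time at which the comparison starts.

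For the upper estimate in \eqref{middle-est}, case (i) is simplest: local uniform convergence $u\to s_1$ combined with the monotonicity property (e) of Section~2.1 (which gives $u_x<0$ on $(b,r(t))$ and $u_x>0$ on $(l(t),-b)$) implies that once $u(\pm b,t)\leq s_1+\eta_0$, also $u(\cdot,t)\leq s_1+\eta_0$ on the full support $[l(t),r(t)]$. Then the boundary values $u(l(t),t)=u(r(t),t)=0$ are automatically below $\overline{u}(t)$, and comparison on all of $\R$ yields $u(x,t)\leq s_1+\eta_0 e^{-\delta(t-t_0)}$ for every $x$ and every $t\geq t_0$. In cases (ii) and (iii) the global bound $u\leq s_1+\eta_0$ fails because $u$ exceeds $s_1$ in a central region (converging to $U^*$, resp.\ to $1$), so the comparison must be restricted to the right window $[a(t),r(t)]$ with $a(t)$ chosen so that $u(a(t),t)\leq s_1+\eta_0 e^{-\delta t}$: in case (ii) a slowly moving $a(t)$ with $a(t)/t\to 0$ works since $U^*(\pm\infty)=s_1$; in case (iii) one takes $a(t)=(c_z+\nu)t$ for some $\nu\in(0,c_s-c_z)$, placing $a(t)$ in the plateau between the inner and outer waves.

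For the lower estimate in \eqref{middle-est}, the spatially constant subsolution $\underline{u}$ cannot be compared against $u$ on all of $[l(t),r(t)]$, since $u=0$ at the free boundary whereas $\underline{u}(t)>0$ eventually. Instead I use a window $[a(t),b(t)]$ strictly inside the support of $u$ and containing $r(t)+H(t)$ in its interior, taking $b(t)=r(t)-\rho$ for a small fixed $\rho>0$ so that $u(b(t),t)$ is bounded below near $s_1$ by the monotonicity and the zeroth-order convergence. Parabolic comparison then gives $u(x,t)\geq s_1-\eta_0 e^{-\delta(t-t_0)}$ on $[a(t),b(t)]$, in particular at $x=r(t)+H(t)$, provided the boundary control $u(a(t),t),u(b(t),t)\geq s_1-\eta_0 e^{-\delta(t-t_0)}$ is secured. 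The left-free-boundary statement at $l(t)-H(t)$ is obtained by applying the symmetric argument on the left half of the support.

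The hard part is case (iii), where the boundary control at $a(t)=(c_z+\nu)t$ must be proved to exponential accuracy without the aid of local uniform convergence to $s_1$ (the latter gives only convergence to $1$ in the interior). I would secure this by a bootstrap using the inner wave $Q_{c_z}$: at $s_1$ the traveling-wave ODE for $Q_{c_z}$ linearizes to $A'(s_1)\mu^2+c_z\mu+f'(s_1)=0$, whose roots are real and of opposite sign since $f'(s_1)<0$, so $s_1$ is a saddle and $Q_{c_z}(\xi)-s_1=O(e^{-\mu_+\xi})$ as $\xi\to+\infty$ with $\mu_+>0$ the positive root. Once the big spreading has produced a broad region where $u$ is close to $1$, one can sandwich $u$ between two shifted copies $Q_{c_z}(x-c_zt-\xi_\pm)$ on the plateau, obtaining $|u(a(t),t)-s_1|=O(e^{-\mu_+\nu t})$, which feeds the required boundary data into the spatially constant comparison above and completes the proof.
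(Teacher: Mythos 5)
The overall scheme — a spatially constant exponential super/sub-solution $s_1\pm\eta_0e^{-\delta t}$ with $0<\delta<-f'(s_1)$, compared to $u$ on a moving window — is indeed the idea behind the references the paper cites (\cite[Lemma 6.5]{DL}, \cite[Lemma 3.2]{DQZ}, \cite[Lemma 5.1]{G}), and your verification that $s_1+\eta_0e^{-\delta t}$ is a supersolution for small $\eta_0$ is correct, as is the global-in-space comparison for the upper bound in case (i). The genuine gap is in your choice of the right endpoint of the window for the lower bound: with $b(t)=r(t)-\rho$ and $\rho$ small and fixed, $u(b(t),t)$ is \emph{not} near $s_1$. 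By the Darcy law the pressure near the free boundary is $v(x,t)\approx c_s\,(r(t)-x)$, so $u(r(t)-\rho,t)$ is of order $\Lambda^{-1}(c_s\rho)$, which is small (near $0$) for small $\rho$, not near $s_1$; the required boundary inequality $u(b(t),t)\geq s_1-\eta_0e^{-\delta t}$ therefore fails. The fix is to place $b(t)$ where $u$ is provably exponentially close to $s_1$; for instance, the comparison $u(x,t+T)\geq Q_{c_s}(x-c_st)$ (valid once $u(\cdot,T)$ dominates a shifted $Q_{c_s}$) together with the exponential approach $Q_{c_s}(\xi)-s_1=O(e^{\mu_+\xi})$ as $\xi\to-\infty$ (the point $s_1$ is a saddle of the wave ODE, since $f'(s_1)<0$) already gives the exponential lower bound at $b(t)=(c_s-\varepsilon)t$, and in fact in the region up to that line, so the spatially constant subsolution is not even needed there.

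A second, smaller issue is the case (iii) bootstrap: asserting that $u$ is sandwiched between \emph{two} shifted copies $Q_{c_z}(x-c_zt-\xi_\pm)$ on the plateau is essentially Theorem 1.3(ii), proved later in the paper, and you cannot assume it at this point. But you only need the one-sided upper comparison $u(x,t+T_1)\leq(1+Me^{-\delta t})\,Q_{c_z}(x-c_zt+X_2)$ on $x\geq 0$, which the paper constructs directly in Section 4.2.1 from the fact that $u\leq 1+Me^{-\delta t}$ in the interior (the stability of $1$). Combined with $Q_{c_z}(\xi)\to s_1$ exponentially as $\xi\to+\infty$, this gives the upper estimate at $r(t)+H(t)$ in case (iii), and the $Q_{c_s}$ comparison from below gives the lower estimate, without any circularity. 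With these two repairs your proof is sound and follows the same route as the cited sources.
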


\noindent
This lemma can be proved as in \cite[Lemma 3.2]{DQZ} and \cite[Lemma 5.1]{G}, we omit the details here. Such a result for reaction diffusion equations (i.e., non-degenerate ones) was  first established in \cite[Lemma 6.5]{DL}, and the analogue for PME was given in \cite[Lemma 3.2]{DQZ}. It is essentially based on the stability of $s_1$.

\medskip
\noindent
{\it Proof of the former part in Theorem \ref{thm2} where (i), (ii) or (iii) holds}. 
We divide the proof into several steps.

\noindent
{\it \underline{Step 1}. Preliminary results on the pressure function $v$}. 
The generalized pressure function $v:= \Lambda(u)$ is defined by \eqref{def-pressure}.
So  \eqref{middle-est} is converted into 
\begin{equation}\label{middle-est-v}
\hat{\varphi}_1 - \hat{M} e^{-\delta t} \leq v(r(t)+H(t),t),\ v(l(t)-H(t),t) \leq \hat{\varphi}_1 +\hat{M} e^{-\delta t},\qquad t\geq T,
\end{equation}
where $\hat{\varphi}_1 := \Lambda(s_1)$ and $\hat{M}$ is some positive number.

Denote $V_{c_s}(x):=\Lambda(Q_{c_s}(x))$ with $V_{c_s}(0)=0$. Then $V_{c_s}(x-c_s t)$ the small 
sharp wave of \eqref{p-v} corresponding to $Q_{c_s}(x-c_s t)$ of \eqref{p}, and    
$$
V_{c_s}(-\infty) = \hat{\varphi}_1,\qquad V'_{c_s}(x)<0 \mbox{ for }x\leq 0.
$$
Based on the definitions of $B, h$ and the properties of $Q_{c_s}$ (cf. subsection 2.1 and section 6), 
we have the following preliminary results:
\begin{enumerate}[(a).]
\item For any small $\varepsilon>0$, there exist $z_0>0$ and some $\varepsilon_0>0$ such that 
$$
\hat{\varphi}_1-\varepsilon\leq V_{c_s}(x)\leq\hat{\varphi}_1 \mbox{  for } x<-z_0,\qquad 
V'_{c_s}(x)\leq -\varepsilon_0 \mbox{ for } -z_0\leq x \leq 0.
$$ 

\item Since $B'(0+0)=A_*>0$, for any small $\varepsilon>0$ and any $\alpha^0>0$, there exist constants $B_1$, $B_2$, $B_3$, $B_4>0$ depending only on $\hat{\varphi}_1$, $\varepsilon$ and $\alpha^0$ such that
\begin{align*}
&B(r)\leq B_1 \mbox{ for } 0\leq r\leq (1+\alpha^0)\hat{\varphi}_1;\qquad \quad 
B(r)\geq B_2  \mbox{ for } \hat{\varphi}_1-\varepsilon\leq r\leq\hat{\varphi}_1, \\
& B_4 \leq B'(r)\leq B_3 \mbox{ for } 0\leq r\leq (1+\alpha^0)\hat{\varphi}_1.
\end{align*}

\item There exists $\hat{\delta}>0$ such that $h'(r)<0$ for $\hat{\varphi}_1-\hat{\delta}\leq r \leq \hat{\varphi}_1+\hat{\delta}$.
So $H(\hat{\delta}):=\inf\limits_{r\in[\hat{\varphi}_1-\hat{\delta},\hat{\varphi}_1+\hat{\delta}]}|h'(r)| >0$. 

\item Since $B\in C^1([0,\infty))$ with $B'(0+0)=A_*$, and $h\in C^1([0,\infty))$ with $h'(0+0)=f'(0)A_*$, for any $\alpha^0>0$, there exists $H_1(\alpha^0)>0$ such that 
$$h(\alpha r)-\frac{\alpha h(r)B(\alpha r)}{B(r)}\leq H_1(\alpha^0)(\alpha-1) \quad \text{for} \ 0<r\leq \hat{\varphi}_1, \ 1\leq \alpha \leq 1+\alpha^0,$$
and for any  $\alpha_0\in(0,1)$, there exists $H_2(\alpha_0)>0$ such that 
$$\frac{\alpha h(r)B(\alpha r)}{B(r)}-h(\alpha r)\leq  H_2(\alpha_0)(1-\alpha) \quad \text{for} \ 0<r\leq \hat{\varphi}_1, \ 1-\alpha_0\leq \alpha \leq 1.$$
\end{enumerate}
These parameters are independent of $v$. In the subsequent construction of sub- and super-solutions, 
additional parameters will be introduced, which may depend functionally on the above established ones.

\noindent
{\it \underline{Step 2}. Construction of a super- and sub-solutions for $v$}. 
We select $\alpha^0_1,\ b^0_1>0$ such that 
$$
v(x,0) \leq (1+\alpha^0_1)V_{c_s}(x-b^0_1) \quad \text{for} \ x\geq 0.
$$
Further, we choose $A_1>0$ large such that the following inequalities hold:
$$
\frac{A_1\delta_1}{\alpha^0_1}>\frac{\hat{\varphi}_1B_3}{B_2},\qquad 
\varepsilon_0c_s\left[\frac{A_1\delta_1}{\alpha^0_1}-\frac{B_3}{B_4}\right]>\hat{\varphi}_1+H_1(\alpha^0) 
\mbox{\ \ \ with\ \ \ } \delta_1:=\min\left\{1, \frac{H(\hat{\delta})}{2}\right\}.
$$
Define  
$$
\overline{V}(x,t):=\alpha_1(t)V_{c_s}(x-c_s t-X^0(t)) \quad \text{for} \ x\in \R, \ t\geq 0,
$$
with
$$
\alpha_1(t):=1+\alpha^0_1e^{-\delta_1t}, \quad X^0(t):= c_s A_1(1-e^{-\delta_1t})+b^0_1,\quad t\geq 0.
$$
We can show that $\overline{V}$ is a supersolution of $v$ in the domain $D:= \{(x,t)\mid x\geq r(t)+H(t),\ t>0\}$.
To do this, we separate the domain $D$ into three parts $D_1 \cup D_2\cup D_3$, where 
$$
D_1 := \{(x,t)\mid c_s t +X^0(t) -z_0\geq x\geq r(t)+H(t),\ t>0\},\quad D_2:= \{(x,t)\mid -z_0\leq x-c_s t -X^0(t) \leq 0,\ t>0\},
$$
and $D_3:= \{(x,t)\mid x-c_s t -X^0(t) \geq 0,\ t>0\}$.  
Then using the asymptotically stability of $\hat{\varphi}_1$ in $D_1$, the fact that $\overline{V}_x$ has negative 
upper bound in $D_2$, as well as the preliminary results (a)-(d) and \eqref{middle-est-v}, 
one can calculate in both domains as in \cite{DQZ, FM} to conclude that $\overline{V}$ is a supersolution in $D$.
(One does not need to worry about the subdomain $D_3$ since both $v$ and $\overline{V}$ take zero in this domain.)

Similarly, we select $\alpha^0_2\in(0,1)$ and $b^0_2\in(0,b)$ satisfying
$$
v(x,0)>(1-\alpha^0_2)V_{c_s}(x-b^0_2) \quad \text{for} \ 0\leq x\leq b^0_2.
$$
Next, we choose $A_2>0$ large enough such that
$$
\frac{A_2\delta_2}{\alpha^0_2}>\frac{\hat{\varphi}_1B_3}{B_2},\quad
\varepsilon_0c_s(1-\alpha^0_2)\left[\frac{A_2\delta_2}{\alpha^0_2}-\frac{B_3}{B_4}\right]>\hat{\varphi}_1+H_2(\alpha_0)
\mbox{\ \ \ with\ \ \ } \delta_2:=\min\left\{1, \delta, \frac{H(\hat{\delta})}{2}  \right\}.
$$
Define   
$$
\underline{V}(x,t):=\alpha_2(t)V_{c_s}(x-c_s t+X_0(t)) \quad \text{for}\ \  x\in \R, \ t\geq 0,
$$
with
$$
\alpha_2(t):=1-\alpha^0_2e^{-\delta_2t}, \quad X_0(t)=c_sA_2(1-e^{-\delta_2t})-b^0_2,\quad t\geq 0.
$$
Then one can verify by the results in (a)-(d) and \eqref{middle-est-v} that $\underline{V}$ is a subsolution of $v$ in the domain $D$.

Combining the super- and sub-solutions together we have  
\begin{equation}\label{lower-upper-est-v}
\underline{V}(x,t)\leq v(x,t)\leq \overline{V}(x,t),\qquad (x,t)\in D,
\end{equation}
and 
\begin{equation}\label{lower-upper-r}
c_s t -X_* \leq c_s t - X_0(t) \leq r(t) \leq c_s t +X^0(t) \leq c_s t +X_*,\qquad t>0,
\end{equation}
for some positive constant $X_*$. 

\noindent
{\it \underline{Step 3}. Convergence of $\tilde{v}$}. Set $\xi := x- c_s t$ and $\tilde{v}(\xi,t) := v(x,t)$. 
Then $\tilde{v}$ solves
\begin{equation}\label{tilde-v-equation}
\tilde{v}_t = B(\tilde{v}) \tilde{v}_{\xi\xi} + \tilde{v}_{\xi}^2 + c_s \tilde{v}_\xi + h(\tilde{v}),\qquad \xi\in \R,\ t>0.
\end{equation}
The estimates in \eqref{lower-upper-est-v} and \eqref{lower-upper-r} are converted into   
\begin{equation}\label{lower-upper-v-xi}
\left\{
\begin{array}{ll}
\alpha_2(t) V_{c_s}(\xi +X_0(t)) \leq \tilde{v}(\xi,t) \leq \alpha_1(t) V_{c_s}(\xi -X^0(t)),&  \xi\geq r(t)+H(t)-c_s t,\ t>0,\\
-X_* \leq \tilde{r}(t):= r(t) -c_s t\leq X_*, &  t>0.
\end{array}
\right.
\end{equation}
Here $\tilde{r}(t)$ is nothing but the right free boundary of $\tilde{v}(\cdot,t)$.

As in \eqref{v-Holder-est}, the function $\tilde{v}(\xi,t)$ has locally uniform H\"{o}lder bound: 
there exist $C>0,\ \alpha_1\in (0,1)$, such that, for any $M>0,\ T>0$ and any increasing time sequence $\{t_n\}$, there holds,
$$
\|\tilde{v}(\xi,t+t_n)\|_{C^{\alpha_1, {\alpha_1}/2} ([-M,M]\times [-T,T])} \leq C.
$$
Hence, for any $\alpha \in (0,\alpha_1)$, there is a subsequence of $\{t_n\}$ (denote it again by $\{t_n\}$) and a function $\tilde{w}_{MT}(\xi,t)$, such that
$$
\|\tilde{v}(\xi,t+t_n )- \tilde{w}_{MT} (\xi,t)\|_{C^{\alpha, \alpha/2} ([-M,M]\times [-T,T])} \to 0 \mbox{\ \ as \ \ }n\to \infty.
$$
Using Cantor's diagonal argument, there exist a subsequence of $\{t_n\}$ (denote it again by $\{t_n\}$) and a function $\tilde{V}(\xi,t)\in C^{\alpha,\alpha/2} (\R\times \R)$ such that
\begin{equation}\label{tilde-v-to-V}
\tilde{v}(\xi, t+ t_n )\to  \tilde{V} (\xi,t) \mbox{\ \ as\ \ }n\to \infty,\quad \mbox{in the topology of } C^{\alpha,\alpha/2}_{loc} (\R^2).
\end{equation}
Furthermore, if $\tilde{V}(\xi,t)>0$ in a domain $\tilde{E}\subset \R^2$, then for any compact subset $\tilde{D}\subset \tilde{E}$, there exists small $\rho>0$ such that
$$
\tilde{V}(\xi,t),\  \tilde{v}(\xi, t+t_n)\geq \rho>0,\quad (\xi,t)\in \tilde{D}, \ n\gg 1.
$$
Then, $\tilde{v}(\xi, t+t_n)$ is classical in $\tilde{D}$, and so, for any $\beta_1\in (0,1)$ and any large $n$, $\|\tilde{v}(\xi, t+t_n )\|_{C^{2+\beta_1, 1+{\beta_1} /2}(\tilde{D})}\leq C$ for some $C$ independent of $n$. This implies that a subsequence of $\{\tilde{v}(\xi, t+t_n )\}$ converges in $C^{2+\beta, 1+\beta/2}(\tilde{D})$ $(0<\beta<\beta_1)$ to the limit $\tilde{V}(\xi,t)$. Thus, $\tilde{V}(\xi,t)$ is a classical solution of 
\eqref{tilde-v-equation} in the domain where $\tilde{V}(\xi,t)>0$.

Using the second conclusion in \eqref{lower-upper-v-xi} and the boundedness of $r'(t)$ in \eqref{r'l'-bound} we see that, there exist a subsequence of $\{t_n\}$ (denote it again by $\{t_n\}$) and a function $\tilde{R}(t)$ such that
\begin{equation}\label{tilde-r-to-R}
\tilde{r}(t+t_n) \to \tilde{R}(t)\quad \mbox{ as }n\to \infty,
\end{equation}
in the topology of $C_{loc}(\R)$. Since $\tilde{V}(\xi,t)$ it is a very weak solution of \eqref{tilde-v-equation} for all $(\xi,t)\in \R^2$,
the Darcy's law corresponding to \eqref{tilde-v-equation} holds: 
$$
\tilde{R}'(t) = - \tilde{V}_x (\tilde{R}(t)-0,t) -c_s,\qquad t\in \R.
$$ 

\noindent
{\it \underline{Step 4}. Convergence of $\tilde{u}$}. Denote by $\tilde{u}(\xi,t)$ and $\tilde{U}(\xi,t)$ respectively the density 
functions corresponding to the pressure functions $\tilde{v}(\xi,t)$ and $\tilde{V}(\xi,t)$. Then 
\begin{equation}\label{tilde-u-equation}
\tilde{u}_t = [A(\tilde{u})]_{\xi\xi} + c_s \tilde{u}_\xi + f(\tilde{u}),\qquad \xi\in \R,\ t>0.
\end{equation}
The first conclusion in \eqref{lower-upper-v-xi} is converted into 
\begin{equation}\label{lower-upper-tilde-u}
(1-\mu_1(t)) Q_{c_s}(\xi +X_0(t)) \leq \tilde{u}(\xi,t) \leq (1+\mu_2(t)) Q_{c_s}(\xi -X^0(t)),\quad  \xi\geq r(t)+H(t)-c_s t,\ t>0,
\end{equation}
for some positive functions $\mu_i(t)\to 0\ (t\to \infty,\ i=1,2)$. The convergence in \eqref{tilde-v-to-V} implies 
\begin{equation}\label{tilde-u-to-W}
\tilde{u}(\xi, t+ t_n) \to \tilde{U}(\xi,t)\quad \mbox{ as }n\to \infty,
\end{equation}
in the sense of \eqref{def-conv}.  As a consequence, 
$$
Q_{c_s} (\xi +X_*) \leq \tilde{U}(\xi,t) \leq Q_{c_s}(\xi -X_*),\qquad |\tilde{R}(t)|\leq X_*,\qquad \xi,\ t\in \R.
$$

Using the intersection number argument to compare $\tilde{u}(\xi,t)$ with $Q_{c_s}(\xi +X_0)$ for each given $X_0\in [-X_*,X_*]$ 
(The intersection number argument for degenerate equations was introduced in \cite{LZ}, which is actually 
applied for pressure functions $\tilde{v}$ and $V_{c_s}$. The conclusions remain obviously valid for $\tilde{u}$ and $\tilde{U}$), we conclude that $\tilde{U}(\xi,t)\equiv Q_{c_s}(\xi-r^*)$ for some $r^*\in [-X_*, X_*]$, and $\tilde{R}(t)\equiv r^*$. In addition, the same intersection number argument even implies that the $\omega$-limit of $\tilde{u}(\cdot,t)$ is unique. In other words, $\tilde{r}(t)\to r^*$ and $\tilde{u}(\xi,t)\to Q_{c_s}(\xi-r^*)$ as $t\to \infty$ (in the sense of \eqref{def-conv}). 
This proves the second limit in \eqref{s-profile}, as well as the first one for $x\in [-M,\infty)$ ($M>0$ is arbitrarily given). 
Combining with the monotonicity (e) in subsection 2.1 we obtain the first limit in \eqref{s-profile}. 

The conclusions in \eqref{s-profile-left} on the left part of the solution, that is, the solution on $(-\infty, -H(t)]$ is proved similarly.
\qed

\subsection{The latter part of Theorem \ref{thm2}}
In this subsection we consider the latter part of Theorem \ref{thm2}, where the big spreading  happens and $c_s =c_z$.
For any $s_*\in (0,s_1)$ and any $s^*\in (s_1,1)$, by the monotonicity of $u(\cdot,t)$ in $[b,r(t)]$, its $s_*$-level set and $s^*$-level set are uniquely defined for large $t$:
\begin{equation}\label{def-level-set}
u(\chi_* (t),t) =s_* ,\qquad u(\chi^* (t),t) =s^*, \qquad t\gg 1.
\end{equation} 
By the big spreading assumption and the monotonicity of $u(\cdot,t)$ in $[b,r(t)]$, they are well-defined for large $t$. 
We will show that they are getting farther and father apart, but with almost the same average speeds: 
$$
d(t):= \chi_*(t)-\chi^*(t) \to \infty,\quad d(t)=o(t)\ \ \ (t\to \infty).
$$ 
Then, it is generally impossible to obtain the exponential decay to $s_1$ as in \eqref{middle-est} for the points between $\chi^*(t)$ and $\chi_*(t)$,
and so the construction of sub- and super-solutions as in the previous subsection is no longer applicable. Instead, we will use an 
idea in Pol\'{a}\v{c}ik \cite{Polacik-MAMS} to proceed our discussion, which is based on 
the phase plane analysis and the zero number argument.

\subsubsection{The estimates of $\chi_*(t)$ and $\chi^*(t)$}
We take a large $T_1>0$, then by the big spreading assumption there exist $M,\delta>0$ such that
\begin{equation}\label{big-spread-middle-0}
\max\{s_2,s^*\} < u(x,t) \leq 1+Me^{-\delta t},\qquad x\in [0,b],\ t\geq T_1,
\end{equation}
and so $u(x,T_1) \geq Q_{c_s}(x)$ for $x\geq 0$. By comparison we have
$$
u(x, t+ T_1) \geq Q_{c_s}(x-c_s t),\qquad x\geq 0,\ t\geq 0.
$$
So, for some $X_1>0$, there holds
\begin{equation}\label{r>ct-0}
\chi_*(t) \geq c_s t -X_1,\qquad t\geq 0.
\end{equation}

On the other hand, using \eqref{big-spread-middle-0} and the method of construction for the supersolution 
as in the previous subsection we have
$$
u(x,t+T_1) \leq (1+Me^{-\delta t}) Q_{c_z} (x-c_z t +X_2),\qquad x\geq 0,\ t>0,
$$
for some $X_2\in \R$. This implies that 
\begin{equation}\label{xi<ct-0}
\chi^* (t) \leq c_z t +X_3 = c_s t+X_3,\qquad t\geq 0,
\end{equation}
for some $X_3>0$.

\begin{lem}\label{lem:dht-to-infty}
There holds $d (t)\to \infty$ as $t\to \infty$.
\end{lem}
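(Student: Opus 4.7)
\medskip

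\noindent
\textit{Proof plan.} The plan is to argue by contradiction. Suppose $d(t)\not\to\infty$; then, after passing to a subsequence, there exist $\bar D\in[0,\infty)$ and $t_n\to\infty$ with $d(t_n)\to\bar D$. I would recenter the solution at $\chi^*(t_n)$, extract an entire monotone profile by parabolic compactness, and show that the hypothesis $c_s=c_z$ combined with the traveling wave classification of Proposition~\ref{prop:TWs} rules out the existence of such a limit.

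\textit{Step 1 (Recentering and compactness).} Define $u_n(x,s):=u(x+\chi^*(t_n),\,t_n+s)$. Since big spreading yields $u(x,t)\to1$ locally uniformly and $u(\cdot,t)$ is monotone outside $[-b,b]$ by property~(e) of subsection~2.1, one has $\chi^*(t_n)\to+\infty$, so the domains of $u_n$ exhaust $\R\times\R$. Applying the H\"older estimate \eqref{v-Holder-est} to the corresponding pressure functions and the Cantor diagonal procedure used in Step~3 of the former part of this section, a subsequence of $u_n$ converges in the sense of \eqref{def-conv} to an entire bounded very weak solution $U_\infty$ of $U_s=[A(U)]_{xx}+f(U)$ on $\R^2$. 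By construction and the convergence $d(t_n)\to\bar D$, one has $U_\infty(0,0)=s^*$ and $U_\infty(\bar D,0)=s_*$; by monotonicity property (e), $U_\infty(\cdot,s)$ is non-increasing on $\R$ for every $s\in\R$.

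\textit{Step 2 (Limits at $\pm\infty$).} Propagating the bulk bound \eqref{big-spread-middle-0} to the region far behind $\chi^*(t_n)$ via comparison with the high stationary solutions $U_{q_2}$ of Case C, one obtains $U_\infty(x,s)\to 1$ as $x\to-\infty$ uniformly on compact $s$-intervals. By monotonicity, $U_\infty(x,s)\to\mu_+(s)\in[0,s_*]$ as $x\to+\infty$, and this spatially constant limit must be a nonnegative bounded entire orbit of $\dot w=f(w)$ contained in $[0,s_*]\subset[0,s_1)$; since $0$ is the only such orbit in that range, $\mu_+(s)\equiv 0$. Thus $U_\infty(\cdot,s)$ is a monotone entire profile connecting $1$ at $-\infty$ to $0$ at $+\infty$, with transition through $s_1$ confined to the bounded interval $[0,\bar D]$ at $s=0$.

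\textit{Step 3 (Traveling-wave structure and contradiction).} Using the zero-number-diminishing properties for the pressure formulation developed in \cite{LZ} together with the translation invariance of the equation, a sliding/compactness argument analogous to Pol\'a\v{c}ik \cite{Polacik-MAMS} shows that the monotone entire solution $U_\infty$ must be of traveling wave form $U_\infty(x,s)=Q(x-\gamma s-x_0)$ for some speed $\gamma$ and a monotone decreasing profile $Q$ with $Q(-\infty)=1$, $Q(+\infty)=0$. The two-sided estimates $\chi^*(t_n)\leq c_z t_n+X_3$, $\chi_*(t_n)\geq c_s t_n-X_1$, together with the boundedness of $d(t_n)$, force $\chi^*(t_n)=c_s t_n+O(1)=c_z t_n+O(1)$ and hence $\gamma=c_s=c_z$. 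But by the phase-plane classification of subsection~2.3, the only monotone wave connecting $1$ to $0$ is the big sharp wave $Q_{c_b}$ of Proposition~\ref{prop:TWs}(iii), whose speed satisfies $c_b\in(c_s,c_z)$ and which exists only when $c_s<c_z$. This contradicts $c_s=c_z$, so $d(t)\to\infty$.

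\textit{Main obstacle.} The critical difficulty lies in Step~3, namely proving that a monotone non-increasing bounded entire solution of the degenerate equation with prescribed limits $1$ and $0$ at $\mp\infty$ is necessarily a traveling wave. Classical parabolic sliding or moving-plane methods do not directly apply because the equation is not uniformly parabolic near $u=0$. The plan is to work on the pressure $v=\Lambda(u)$, where the intersection-number machinery of \cite{LZ} is available, and to compare $U_\infty(\cdot,s+\tau)$ with spatial shifts $U_\infty(\cdot+h,s)$ for varying parameters $(h,\tau)$; the monotonicity in $x$ together with the finiteness and monotonicity in $s$ of the zero number of the difference should yield a pair $(h_0,\tau_0)\neq(0,0)$ for which these two functions coincide, forcing the space-time translation invariance that characterizes a traveling wave. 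A secondary technical point is the careful handling of the possibly sharp right front of $U_\infty$ via the Darcy law \eqref{general-sol-Darcy}.
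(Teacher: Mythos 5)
Your strategy differs from the paper's in a way that shifts the real difficulty into a place where you have not resolved it. The paper does not recenter at the moving level set $\chi^*(t_n)$ and then try to prove a Liouville theorem for monotone entire solutions. Instead it passes directly to the fixed-speed moving frame $\tilde u(\xi,t):=u(\xi+c_s t,t)$, which satisfies \eqref{tilde-u-eq} with $\beta=-c_s$ and has initial data in $\mathcal I(b)$, and then invokes the general convergence result (Theorem~\ref{thm:general-conv}). This \emph{immediately} yields that $\tilde u(\cdot,t)$ converges to a stationary solution $\tilde w$ of the Cauchy problem of the moving-frame equation --- that is, a traveling wave of speed $c_s$ that moreover obeys the Darcy condition $c_s=-[\Lambda(\tilde w)]_\xi(\tilde r-0)$ at its free boundary. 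Nothing like your ``Step~3 / main obstacle'' (sliding/zero-number argument to show a bounded monotone entire solution of a degenerate equation is a traveling wave) is needed, and you are right that this step, as you pose it, would be difficult; you only sketch it. The boundedness needed to apply Theorem~\ref{thm:general-conv} comes cheaply from \eqref{r>ct-0}--\eqref{xi<ct-0} plus the contradiction hypothesis.

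There is also a substantive gap in your final contradiction. You assert that ``the only monotone wave connecting $1$ to $0$ is the big sharp wave $Q_{c_b}$, which exists only when $c_s<c_z$.'' That statement ignores the Type~IV trajectories described at the end of Section~\ref{sec:sstw}: at the speed $c_s$ there \emph{is} a monotone decreasing traveling wave $\Phi_4$ taking values above $\hat\varphi_1$ on the left and hitting $0$ at a finite point $\xi_4$. So ``no such wave exists'' is false, and the contradiction cannot be obtained purely from Proposition~\ref{prop:TWs}. The actual contradiction in the paper is the one you relegate to a ``secondary technical point'': the Type~IV wave has $\Phi_4'(\xi_4-0)=-\infty$, i.e.\ its pressure has infinite slope at the interface, whereas the limit obtained from Theorem~\ref{thm:general-conv} must satisfy the finite Darcy slope $-c_s$ at its free boundary. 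The Darcy condition is not peripheral; it is the heart of the argument. A lesser but real issue is your identification of the speed $\gamma=c_s$ in Step~3: the bounds $\chi^*(t_n)=c_st_n+O(1)$ hold only along the chosen subsequence and do not by themselves force the entire-solution limit to have speed exactly $c_s$ without additional control over all large $t$. In short, your plan is structurally more elaborate, leaves the essential Liouville step unproved, and misplaces where the contradiction actually lives.
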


\begin{proof}
By contradiction, assume that there exists a time sequence $\{t_n\}$ with $t_n \nearrow \infty\ (n\to \infty)$ 
and $d(t_n)\leq X_4$ for some $X_4>0$. Then, it follows from \eqref{r>ct-0} and \eqref{xi<ct-0} that
$$
-X_1 \leq \chi_*(t_n) -c_s t_n \leq d (t_n) + X_3 \leq X_3 + X_4.
$$
Thus, by taking a subsequence if necessary we may assume that 
$\chi_*(t_n)- c_s t_n \to \tilde{\chi}\in [-X_1, X_3+X_4]\ (n\to \infty)$. 
Set $\tilde{u}(\xi,t):= u(\xi+c_s t,t)$, then $\tilde{\chi}_* (t) := \chi_*(t)- c_s t$ is the rightmost
$s_*$-level set of $\tilde{u}(\cdot,t)$, and
\begin{equation}\label{eq-tilde-u-0}
\tilde{u}_t = [A(\tilde{u})]_{\xi\xi} + c_s \tilde{u}_{\xi} + f(\tilde{u}), \qquad \xi\in \R,\ t>0.
\end{equation}
Using the general convergence result Theorem \ref{thm:general-conv} we conclude that 
$\tilde{u}(\xi,t)$ converges in the topology of $C^2_{loc}((-\infty,\tilde{\chi}])$ and $L^\infty_{loc}(\R)$ 
to some stationary solution $\tilde{w}(\xi)$ of the Cauchy problem of \eqref{eq-tilde-u-0}, that is,
$$
[A(\tilde{w})]_{\xi\xi} + c_s \tilde{w}_{\xi} + f(\tilde{w})=0  \mbox{ for } \xi \in \R,
\qquad \tilde{w}(\tilde{\chi}) =s_*.
$$
In addition, $\tilde{w}(\xi)$ is either positive for all $\xi\in \R$, or its has a unique free boundary 
$\tilde{r}\in (\tilde{\chi},\infty)$ with 
\begin{equation}\label{limit-with-bdry}
\tilde{w}(\xi)=0 \mbox{ for }\xi\geq \tilde{r},\quad \tilde{w}(\xi)>0 \mbox{ for }\xi<\tilde{r},\quad c_s = -[\Lambda(\tilde{w})]_\xi (\tilde{r}-0). 
\end{equation}
The last Darcy's law follows from the fact that $\tilde{w}$ is a stationary solution not only 
of the equation  \eqref{eq-tilde-u-0} but also of its Cauchy problem.

On the other hand, by $0<d (t_n)\leq X_4$ we have $d (t_n) \to \tilde{d} \in [0,X_4]$ (take a subsequence if necessary), 
and so
$$
s^* =  u(\chi^* (t_n),t_n) = \tilde{u} (\chi^* (t_n) -c_s t_n, t_n)  =  \tilde{u}(\chi_*(t_n) -c_s t_n -d(t_n), t_n) \to  
\tilde{w}(\tilde{\chi} -\tilde{d}).
$$
Therefore, $\tilde{w}(x-c_s t)$ is a traveling wave of the equation in \eqref{p}, which 
takes value $s^*\in (s_1,1)$ at some point. According to our classification of the traveling waves in the last section, 
such a traveling wave must be of the Type IV, and so it has a free boundary $\hat{r}$ where 
$[\Lambda(\tilde{w})]_{\xi} (\hat{r})=-\infty$. This, however, contradicts the above stated Darcy's law.
This proves the lemma. 
\end{proof}

We continue to show that
\begin{lem}\label{lem:dht-o}
$d(t) = o(t)$ and $\frac{\chi_* (t)}{t}\to c_s$ as $t\to \infty$.
\end{lem}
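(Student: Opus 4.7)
The plan is to argue in two stages. Since the lower bound $\liminf_{t\to\infty}\chi_*(t)/t \geq c_s$ follows directly from \eqref{r>ct-0}, I will first establish the analogous $\chi^*(t)/t \to c_s$ (upper bound from \eqref{xi<ct-0}; matching lower bound $\chi^*(t) \geq c_s t - C_0$ obtained by using the classical wave $Q_{c_z}$ as a subsolution at a large time $T_2$ where $u(\cdot,T_2)$ is close to $1$ on a long interval, via the zero-number comparison argument of \cite{LZ} adapted to the degenerate setting); then I will show $\limsup_{t\to\infty} \chi_*(t)/t \leq c_s$ by contradiction. The conclusion $d(t) = o(t)$ then follows from $d(t)/t = \chi_*(t)/t - \chi^*(t)/t \to 0$.

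For the upper bound, suppose for contradiction that $\chi_*(t_n)/t_n \to c' > c_s$ along some $t_n \to \infty$, so that $\tilde{\chi}_*(t_n) := \chi_*(t_n) - c_s t_n \to +\infty$. Working in the $c_s$-moving frame $\tilde{u}(\xi,t) := u(\xi + c_s t, t)$ (which solves \eqref{tilde-u-eq} with $\beta = -c_s$), Theorem~\ref{thm:general-conv} gives $\tilde{u}(\cdot,t) \to \tilde{w}$ in the sense of \eqref{def-conv}, where $\tilde{w}$ is a nonnegative monotone (inherited via property~(e) of Subsection~2.1) stationary solution of the drifted Cauchy problem. The sub-solution bound $\tilde{u}(\xi,t) \geq Q_{c_s}(\xi + X_1)$ from \eqref{r>ct-0}, together with the $Q_{c_z}$-sub-solution of Step~1 and the global $L^\infty$ bound inherited from \eqref{big-spread-middle-0}, rule out the candidates $\tilde{w}\equiv 0, s_1, s_2, 1$. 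Combined with Theorem~\ref{thm:general-conv}(i), Proposition~\ref{prop:TWs} (noting that no ``big sharp wave'' exists when $c_s = c_z$), and monotonicity, the only possibilities for $\tilde{w}$ are the sharp wave $Q_{c_s}(\cdot - z_1)$ or the classical wave $Q_{c_z}(\cdot - z_2')$. The first possibility is ruled out immediately: since the $s_*$-level of $Q_{c_s}(\cdot - z_1)$ is the single bounded point $z_1 + Q_{c_s}^{-1}(s_*)$, monotonicity plus local-uniform convergence would force $\tilde{\chi}_*(t)$ to converge to this finite value, contradicting $\tilde{\chi}_*(t_n) \to +\infty$.

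The hard part is ruling out $\tilde{w} = Q_{c_z}(\cdot - z_2')$, in which case $\tilde{u}(\xi,t) \to Q_{c_z}(\xi - z_2') > s_1$ in $C^2_{loc}(\R)$ and $\tilde{r}(t) \to +\infty$. My plan is a sequence of two further shifts. Set $\hat{u}^n(\eta,t) := \tilde{u}(\eta + \tilde{\chi}_*(t_n), t + t_n)$, with $\hat{u}^n(0,0) = s_*$; by the H\"older estimate \eqref{v-Holder-est} and the Darcy--Lipschitz bound \eqref{r'l'-bound}, a subsequential local-uniform limit $U^\infty(\eta,t)$ exists as an entire monotone solution of the drifted equation. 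Applying Theorem~\ref{thm:general-conv} to $U^\infty$ (in both $t \to \pm\infty$ directions) together with the zero-number diminishing argument against translates of $Q_{c_s}$ on the pressure variable as in \cite{LZ}, I expect $U^\infty \equiv Q_{c_s}(\cdot - z^*)$ with $Q_{c_s}(-z^*) = s_*$; in particular $U^\infty \leq s_1$ everywhere. Since $\tilde{u}(\xi_0, t_n) \to Q_{c_z}(\xi_0 - z_2') > s_1$ for each fixed $\xi_0$ but $\hat{u}^n(\eta, 0) \to Q_{c_s}(\eta - z^*) \leq s_1$ for each fixed $\eta$, the right-most $s_1$-level set $\tilde{\chi}_{s_1}(t_n)$ must diverge to $+\infty$. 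A third shift $\check{u}^n(\eta,t) := \tilde{u}(\eta + \tilde{\chi}_{s_1}(t_n), t + t_n)$ yields an entire monotone limit $V^\infty$ with $V^\infty(0,0) = s_1$; by the phase-plane analysis of Section~6, the saddle $(s_1,0)$ is attained only at $\eta = \pm\infty$ by $Q_{c_s}$ and $Q_{c_z}$, so no non-constant monotone profile of speed $c_s$ passes through $V = s_1$ at a finite point, whence $V^\infty \equiv s_1$.

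The main obstacle is the final step: quantitatively reconciling the three regimes ($Q_{c_z}$ at bounded $\xi$, constant $s_1$ at scale $\tilde{\chi}_{s_1}(t_n)$, and $Q_{c_s}$-translate at scale $\tilde{\chi}_*(t_n)$) to derive a contradiction with the posited linear-in-$t_n$ growth of $\tilde{\chi}_*(t_n)$. Showing that the entire solutions $U^\infty$ and $V^\infty$ are $t$-independent, rather than merely $t\to+\infty$ limits, and translating the three local-uniform convergences into a global quantitative inconsistency, both require the full strength of the zero-number diminishing on the pressure variable developed in \cite{LZ}, together with uniform parabolic regularity and Darcy's law at the free boundary of $U^\infty$. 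The argument depends essentially on $c_s = c_z$: were $c_s < c_z$, the big sharp wave $Q_{c_b}$ would furnish a non-constant speed-$c_s$ monotone profile through $V = s_1$ at a finite position, invalidating the step $V^\infty \equiv s_1$ and necessitating the separate analysis in case~(i) of Theorem~\ref{thm3}.
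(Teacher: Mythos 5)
Your proposal diverges from the paper's proof in an essential way, and it contains two genuine gaps.

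First, the lower bound for $\chi^*(t)$. You propose to use the classical wave $Q_{c_z}$ as a subsolution after a large time $T_2$ at which $u(\cdot,T_2)$ is close to $1$ on a long interval. But $Q_{c_z}$ is defined on all of $\R$ with $Q_{c_z}(+\infty)=s_1>0$, whereas $u(\cdot,T_2)$ has compact support; there is no way to arrange $u(\cdot,T_2)\geq Q_{c_z}(\cdot - \text{shift})$ globally, and a local inequality does not feed into the standard comparison principle without a cutoff that you do not supply. The paper handles this by invoking Proposition \ref{prop:last}(iv): in the critical case $c_s=c_z$, for every small $\varepsilon>0$ the phase-plane analysis produces a \emph{compactly supported} traveling wave $U_1(x-(c_s-\varepsilon)t)$ with $\max U_1\in(s^*,1)$ and $[\Lambda(U_1)]'(0-0)<-(c_s-\varepsilon)$. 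Because $U_1$ is compactly supported and its maximum is strictly below $1$, it can genuinely be placed below $u(\cdot,T_2)$, and its Darcy-law inequality makes it a subsolution. This gives $\chi^*(t)\geq (c_s-\varepsilon)(t-T_2)$ with an explicit $\varepsilon$-slack; the $\varepsilon$ is the whole point and is lost in your version.

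Second, the upper bound for $\chi_*(t)$. Your contradiction argument (three successive recenterings, entire-solution limits $U^\infty$, $V^\infty$, classification via zero-number and phase plane) is essentially a sketch in the style of Pol\'a\v{c}ik. You yourself flag the key step -- ``translating the three local-uniform convergences into a global quantitative inconsistency'' and ``showing that $U^\infty$, $V^\infty$ are $t$-independent'' -- as obstacles rather than resolved steps. As written, this does not close. The paper's route is entirely different and much simpler: Proposition \ref{prop:last}(v) supplies, again only thanks to $c_s=c_z$, a traveling wave $U_2(x-(c_s+\varepsilon)t)$ which is unbounded on the left ($U_2(x)\to\infty$ as $x\to -L_2$) and satisfies $[\Lambda(U_2)]'(0-0)>-(c_s+\varepsilon)$, making it a supersolution in the common domain; comparison then gives $\chi_*(t)\leq(c_s+\varepsilon)(t-T_3)+X$ directly. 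Combining the two $\varepsilon$-estimates and letting $\varepsilon\to 0$ yields $d(t)=o(t)$ and $\chi_*(t)/t\to c_s$ with no dynamical-systems machinery at all. In short: the paper's proof is a two-sided sandwich by $\varepsilon$-off-speed traveling waves constructed expressly for the critical case; your proposal misses this tool and, lacking it, cannot complete either bound.
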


\begin{proof}
For any sufficiently small $\varepsilon>0$, by Proposition \ref{prop:last} (iv) in the last section, there exists a traveling wave $U_1(x-(c_s - \varepsilon)t)$ of the equation in \eqref{p}, such that $U_1(x)$ has a compact support:
$$
U_1(x)>0 \mbox{ in } (-L_1, 0),\quad U_1(-L_1)=U_1(0)=0,\quad -[\Lambda(U_1)]' (0-0)\geq c_s -\varepsilon,\quad \max\limits_{[-L_1,0]} U_1(x) \in (s^*,1).
$$
Then when the big spreading happens, we can select $T_2>0$ large such that $u(x,T_2)\geq U_1(x)$. By comparison we have
$$
u(x,t+T_2)\geq U_1 (x-(c_s -\varepsilon) t),\qquad x\in \R,\ t>0.
$$
This implies that
\begin{equation}\label{xi>cst-0}
\chi^* (t) \geq (c_s -\varepsilon) (t-T_2)\qquad \mbox{for}\ t>T_2.
\end{equation}

On the other hand, by Proposition \ref{prop:last} (v) in the last section, there exists a traveling wave 
$U_2(x-(c_s + \varepsilon)t)$ of the equation in \eqref{p}, where $U_2(x)$ satisfies, for some $L_2>0$,
$$
U_2(x)>0 \mbox{ in } (-L_2, 0),\qquad U_2(-L_2)= 1+\varepsilon,\qquad U_2(0)=0,\qquad -[\Lambda(U_2)]' (0-0)\leq c_s +\varepsilon.
$$
Then, when the big spreading  happens, we can select $T_3>T_2$ large such that $u(x,T_3)<U_2(x-X)$ for some $X>0$, 
in their common domain. By comparison we have
$$
u(x,t+T_3)\leq U_2 (x-(c_s +\varepsilon) t-X),\qquad (c_s +\varepsilon)t+X -L_2\leq x\leq (c_s +\varepsilon)t+X,\ t>0,
$$
in their common domain. This implies that
\begin{equation}\label{rt<cst-0}
\chi_* (t) \leq (c_s +\varepsilon) (t-T_3) +X \qquad \mbox{for} \ t>T_3.
\end{equation}

Combining \eqref{xi>cst-0} and \eqref{rt<cst-0} have
$$
d(t) = \chi_* (t)-\chi^* (t) \leq 2\varepsilon t +X, \qquad t>T_3.
$$
Combining \eqref{r>ct-0} and \eqref{rt<cst-0} we have
$$
c_s -\varepsilon \leq\frac{\chi_*(t)}{t}\leq c_s + 2\varepsilon,\qquad t\gg 1.
$$
This proves the lemma.
\end{proof}

\subsubsection{Renormalization and convergence to an entire solution}
For any time sequence $\{t_n\}$ with $t_n\to \infty\ (n\to \infty)$, we consider the sequence 
$v_n(x,t):= v(x+\chi_*(t_n),t+t_n)$. 
Each of them solves the equation \eqref{p-v}. 
Using a similar argument as in Step 3 in the previous subsection, we see that, there exist a subsequence 
of $\{t_n\}$ (denote it again by $\{t_n\}$) and a function $V(x,t)$ such that, as $n\to \infty$, there holds 
\begin{equation}\label{vn-to-V}
v_n (x, t) := v(x+\chi_*(t_n),t+t_n) \to  V (x,t),\quad \mbox{ in the topology of } C^{\alpha,\alpha/2}_{loc}(\R^2),
\end{equation}
for some $\alpha\in (0,1)$. We will prove in this section that $V(x,t)\equiv V_{c_s}(x-c_s t -y_1)$ for some $y_1\in \R$. 
But at the present stage, we even do not know whether $V(\cdot, t)$ has a free boundary or not. 
If it has, we denote it by $R(t)$, otherwise, we regard $R(t)$ as $+\infty$. 
Then the above convergence holds also in the topology of $C^{2+\alpha,1+\alpha/2}_{loc} ((-\infty,R(t))\times \R)$. 
By the definition of $\chi_*(t)$ we have $v_n(0,0)= V(0,0)= \Lambda(s_*)$. 
Thus, for any $t_*<0$, $V(x,t_*)\not\equiv 0$ or $\hat{\varphi}_1 := \Lambda(s_1)$. 
(Otherwise, $V(x,t)\equiv 0$ or $\hat{\varphi}_1$ for all $t\geq t_*$, a contradiction.)  
Furthermore, by the monotonicity of $v(\cdot,t)$ in $[b,r(t)]$ and Lemma \ref{lem:dht-to-infty} 
 we have the following properties for $V$: 
\begin{equation}\label{prop-V-1}
\left\{
\begin{array}{l}
V(0,0)= \Lambda(s_*),\quad V(x,t)\in [0, s^*] \mbox{ for all } x,\ t\in \R,\\
V(x,t)>0 \mbox{ and } V_x (x,t)\leq 0 \mbox{ for } x< R(t),\ t\in \R,\\
R'(t) = - V_x (R(t)-0, t) \mbox{\ \ when\ \ } R(t)<\infty.
\end{array}
\right.
\end{equation}
By the arbitrariness of $s^*$ and using the strong maximum principle we even have 
\begin{equation}\label{prop-V-2}
0<V(x,t)< \hat{\varphi}_1,\qquad V_x(x,t) <0 \quad \mbox{ for } x < R(t),\ t\in \R. 
\end{equation}

\subsubsection{The consistency of $V$ and $V_{c_s}$}
Set 
$$
\xi := x-c_s t,\qquad \hat{v}_n (\xi,t):= v_n(\xi+c_s t,t),\qquad \hat{V}(\xi,t):= V(\xi +c_s t,t),\qquad \xi,t\in \R.
$$
Then $\hat{v}_n\to \hat{V}$ in the $C^{\alpha,\alpha/2}_{loc}(\R^2)$ and 
$C^{2+\alpha,1+\alpha/2}_{loc} ((-\infty,\hat{R}(t))\times \R)$ topologies, for some $\alpha\in (0,1)$
and $\hat{R}(t):= R(t)-c_s t$ when $R(t)<\infty$. All of  $\hat{v}_n$ and $\hat{V}$ are very weak solutions of 
\begin{equation}\label{tilde-v-eq}
v_t = B(v) v_{\xi\xi} + v_{\xi}^2 + c_s v_\xi + h(v),\quad  \xi\in \R.
\end{equation}
The properties \eqref{prop-V-1} and \eqref{prop-V-2} are converted into 
\begin{equation}\label{prop-hat-V}
\left\{
\begin{array}{l}
\hat{V}(0,0)= \Lambda(s_*);\\
0<\hat{V}(\xi,t)< \hat{\varphi}_1,\qquad \hat{V}_\xi (\xi,t) <0 \quad \mbox{ for } \xi < \hat{R}(t),\ t\in \R;\\ 
\hat{R}'(t) = - \hat{V}_\xi  (\hat{R}(t)-0, t) - c_s \mbox{\ \ when\ \ } \hat{R}(t)<\infty.
\end{array}
\right.
\end{equation}
For any given $t_*<0$, we take a $\xi_* <\hat{R}(t_*)$, and denote 
$$
q_* := \hat{V}(\xi_*,t_*) \in (0,\hat{\varphi}_1),\qquad p_* := \hat{V}_\xi (\xi_*,t_*) <0.
$$
In the last section we specify the traveling waves $\varphi(x-ct)$ of \eqref{p-v} by analysis 
on the $\varphi \psi$-phase plane, where $\psi = \varphi'$. In particular, when $c=c_s$, 
we can find a unique traveling wave $\varphi(x-c_s t;c_s, q_*,p_*)$ whose trajectory passes 
through the point $(q_*, p_*)$. We may assume without loss of generality that 
$$
\varphi(\xi_*; c_s,q_*,p_*)=q_*,\qquad \varphi_\xi (\xi_*; c_s,q_*,p_*) =p_*.
$$ 
We will show that $\varphi(\cdot;c_s,q_*,p_*)\equiv V_{c_s}(\cdot)$. But, at this stage, we are 
yet not clear which type among Type I - IV it belongs to.

For any function $w (\xi)$ with $w (\xi)\in (0,\hat{\varphi}_1)$ for $\xi \in I$, we adopt some 
notation in \cite{Polacik-MAMS}:
$$
\tau(w):= \{(w(\xi), w'(\xi))\mid \xi\in I\} \subset S:= \{(\varphi,\psi)\mid 0<\varphi<\hat{\varphi}_1,\ \psi\in \R\}.
$$

\begin{lem}\label{lem:V==Q}
Let $t_*<0$ be any given number. There holds
$$
\hat{V}(\xi,t_*) \equiv V_{c_s} (\xi-y_*) \mbox{ for some } y_* \in \R.
$$
\end{lem}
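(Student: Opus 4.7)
The plan is to show that the entire limit $\hat V$ is independent of $t$ in the moving frame, equivalently a traveling wave in the original frame, and then to identify it with a translate of $V_{c_s}$ via phase-plane uniqueness of orbits through $(q_*,p_*)$.

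First I would fix the asymptotics of $\hat V$ at $\pm\infty$. By \eqref{prop-hat-V} each profile $\hat V(\cdot,t)$ is strictly decreasing on $(-\infty,\hat R(t))$ with range in $(0,\hat\varphi_1)$, so the limits $L_\pm(t):=\lim_{\xi\to\pm\infty}\hat V(\xi,t)$ exist. Passing to the limit in \eqref{tilde-v-eq} along sequences $\xi_k\to\pm\infty$ using the interior regularity \eqref{v-Holder-est} shows that each $L_\pm$ must be a zero of $h$; since $0<\hat V(0,0)=\Lambda(s_*)<\hat\varphi_1$ and $h$ inherits the monostable--bistable structure from \eqref{ass-f}, the only option is $L_-=\hat\varphi_1$ and $L_+=0$.

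Next I would prove that $\hat V$ is $t$-independent. For each $h>0$ both $\hat V(\cdot,t)$ and $\hat V(\cdot,t+h)$ are very weak solutions of \eqref{tilde-v-eq}, so by the degenerate-equation intersection-number argument developed in \cite{LZ}, $N_h(t):=Z_\R(\hat V(\cdot,t+h)-\hat V(\cdot,t))$ is finite and non-increasing in $t$, and drops strictly at each multiple zero. Because the asymptotics at $\pm\infty$ from the previous paragraph are the same at times $t$ and $t+h$, the difference decays at spatial infinity, which bounds $N_h(t)$ uniformly in $t\in\R$; the entire-solution hypothesis then forces $N_h(t)$ to stabilize as $t\to-\infty$, and a standard Sturmian argument that forbids creation of zeros at infinity yields $N_h(t)\equiv 0$. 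Hence $\hat V(\cdot,t+h)\equiv\hat V(\cdot,t)$ for every $h>0$, and $\hat V$ is time-independent.

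Being $t$-independent in the moving frame, $\hat V(\cdot):=\hat V(\cdot,t_*)$ satisfies the traveling-wave ODE at speed $c_s$ (classically wherever $\hat V>0$, weakly with Darcy's law at $\hat R$), and its orbit $\tau(\hat V)\subset S$ passes through $(q_*,p_*)$, departs from $(\hat\varphi_1,0)$ as $\xi\to-\infty$, and either terminates at the axis $\varphi=0$ with slope $-c_s$ (when $\hat R<\infty$, giving a sharp wave) or tends to the origin as $\xi\to+\infty$. By the phase-plane classification in the last section, the only orbit at speed $c_s$ joining $\hat\varphi_1$ to $0$ is the orbit of $V_{c_s}$, so $\hat V(\cdot,t_*)\equiv V_{c_s}(\cdot-y_*)$ for some $y_*\in\R$, as claimed.

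The main obstacle is the second step: verifying that $N_h(t)$ is uniformly bounded and stabilizes to zero despite the degeneracy at $v=0$, the quadratic gradient term $v_\xi^2$ in \eqref{tilde-v-eq}, and the possibility that the free boundaries of $\hat V(\cdot,t)$ and $\hat V(\cdot,t+h)$ may cross. I would address this by transferring the zero-number machinery of \cite{LZ} for RPMEs, which is tailored to Darcy-type free boundaries under the diffusion class \eqref{ass-A}; the needed properties are formulated entirely at the level of the pressure equation and should carry over without essential change.
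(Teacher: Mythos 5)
Your strategy---first showing that the entire solution $\hat{V}$ is $t$-independent and hence a traveling wave, then identifying it with a translate of $V_{c_s}$ by phase-plane uniqueness---is a natural Liouville-type approach, but two of its steps have genuine gaps. First, the claim that $L_\pm:=\lim_{\xi\to\pm\infty}\hat{V}(\xi,t)$ are zeros of $h$ is not correct for a general entire solution: passing to the limit along shifts via parabolic estimates gives the ODE $L_\pm'(t)=h(L_\pm(t))$, not the algebraic equation $h(L_\pm)=0$. Since $h>0$ on $(0,\hat{\varphi}_1)$, if $L_-(t_0)<\hat{\varphi}_1$ for some $t_0$ then $L_-(t)\to 0$ as $t\to-\infty$, so $\hat{V}(\cdot,t)$ would vanish uniformly in backward time; excluding this requires a separate argument tied to the big-spreading assumption. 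In fact the asymptotics $L_-\equiv\hat{\varphi}_1$, $L_+\equiv 0$ are easy to obtain once $\hat{V}$ is already known to be a traveling wave, which inverts the order of your steps. Second, and more seriously, the argument that $N_h(t)$, the zero number of $\hat{V}(\cdot,t+h)-\hat{V}(\cdot,t)$, is uniformly bounded for $t\in\R$ and forced to vanish has no anchor: an entire solution has no initial time from which finiteness can be propagated, decay at $\pm\infty$ does not by itself bound $N_h(t)$ uniformly as $t\to-\infty$, and even granting a uniform bound, stabilization of $N_h$ as $t\to-\infty$ does not automatically yield $N_h\equiv 0$.

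The paper's proof is different in structure and is built precisely to avoid this difficulty. It fixes $t_*$, takes the unique phase-plane orbit $q_1(\cdot)=\varphi(\cdot\,;c_s,q_*,p_*)$ through $(q_*,p_*):=(\hat{V}(\xi_*,t_*),\hat{V}_\xi(\xi_*,t_*))$, and shows $\tau(\hat{V}(\cdot,t_*))=\tau(q_1)\cap S$ by contradiction: otherwise $\xi_*$ is a multiple zero of $\hat{V}(\cdot,t_*)-q_1$, the Pol\'a\v{c}ik/Du--Matano continuity of multiple zeros produces multiple zeros of $\hat{v}_n(\cdot,s_n+t_*)-q_1$ for large $n$, and undoing the moving frame transfers this to a multiple zero of $v(\cdot+c_s t_*,\,s_n+t_n+t_*)-q_1(\cdot-z_n)$ for the \emph{original} solution $v$, which does start from compactly supported data and hence has a true initial time after which its intersection count with the fixed profile $q_1(\cdot-z_n)$ is finite and non-increasing. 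A case analysis over the four types of $q_1$ then gives the contradiction. Your closing phase-plane identification (a monotone decreasing profile with range in $(0,\hat{\varphi}_1)$ must be Type I, i.e., $V_{c_s}$) agrees with the paper, but the rigidity argument leading up to it would need to be replaced by an anchored one of this kind.
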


\begin{proof}
For simplicity, we write $w_1 (\xi):= \hat{V}(\xi,t_*)$ and $q_1(\xi):= \varphi(\xi;c_s,q_*,p_*)$.
(The latter is the traveling wave $\varphi(\xi;c_s,q_*,p_*)$ given as above.)
We first show that 
\begin{equation}\label{tau-w=tau-q}
\tau(w_1) \equiv \tau(q_1)\cap S.
\end{equation}
By the construction of $q_1$, we see that 
\begin{equation}\label{multiple-zero-t*}
\xi_* \mbox{ is a multiple zero of } w_1 - q_1.
\end{equation}

We prove \eqref{tau-w=tau-q} by contradiction. Assume $\tau(w_1) \not\equiv \tau(q_1)\cap S$. Then $\eta:= \hat{V}(\xi,t)-q_1(\xi)$
is a nontrivial solution of a linear equation, and so the zero number argument is applicable. 
(In particular, it is safely applicable near $(\xi,t)=(\xi_*,t_*)$ since the equation is not degenerate
near this point.) By \eqref{multiple-zero-t*} and \cite[Lemma 5.5]{Polacik-MAMS} or \cite[Lemma 2.6]{DM}, 
for all sufficiently large $n$, there exists $s_n\in (-1,1)$ such that 
$$
\hat{v}_n (\xi, s_n +t_*) - q_1(\xi) \mbox{ has a multiple zero } y_n \approx \xi_*.
$$
By the definition of $\hat{v}_n$, this statement is equivalent to the following one: 
\begin{equation}\label{multiple-zero-t}
v(\xi+c_s t_*, s_n + t_n+t_*)-q_1(\xi - z_n) \mbox{ has a multiple zero } y_n \approx 
\xi_* + z_n,
\end{equation}
where $z_n := \chi_*(t_n) +c_s s_n$.

We first assume that $q_1(\xi)$ is of Type II or Type III. For any given large $T>0$, 
since $\chi_*(t_n) + c_s s_n\to \infty\ (n\to \infty)$, we see that for all large fixed $n$,
$v(\xi+c_s t_*,T)$ has no intersection point with $q_1(\xi- z_n)$. According to the profile 
of $q_1$, using the intersection number properties for degenerate equations developed in 
\cite{LZ} we see that for any $t>0$, either $v(\xi+c_s t_*, t+T)$ has no intersection point 
with $q_1(\xi - z_n)$, or they have a unique non-degenerate intersection point. This is 
in particular at time $t= s_n + t_n +t_* - T$, that is, it is true for $v(\xi+c_s t_*, s_n+t_n+t_*)$
and $q_1(\xi -z_n)$. This contradicts \eqref{multiple-zero-t}.

When $q_1(\xi)$ is of Type IV, since $q_1(\xi)>\hat{\varphi}_1$ for $\xi\ll -1$ and 
$q_1$ has a free boundary $r_1$ on the right where $q'_1(r_1-0)=-\infty$. The intersection
number argument as above remains valid, and so we derive a contradiction again. 

Finally, we assume that $q_1\equiv V_{c_s}$ is the small sharp wave. Fixed a large $T>0$ such that
(according to the big spreading assumption) $v(\xi+c_s t_*, T)$ goes down across $\hat{\varphi}_1$
strictly. In other words, there exist $\epsilon_1, \epsilon_2>0$ such that 
$$
\hat{\varphi}_1 + \epsilon_1 = v(\xi_1 + c_s t_*,T),\qquad 
\hat{\varphi}_1 - \epsilon_1 = v(\xi_2 + c_s t_*,T),\qquad 
v_\xi (\xi + c_s t_*,T) <-\epsilon_2 \mbox{ for } \xi\in [\xi_1, \xi_2].
$$ 
Then we can take $n$ sufficiently large such that $v(\xi+c_s t_*, T)$ has exactly one non-degenerate
intersection point in $[\xi_1, \xi_2]$ with $q_1 (\xi- z_n)$. Both $v(\xi+c_s t_*, t+T)$
and $q_1 (\xi- z_n)$ are solutions of \eqref{tilde-v-eq}, and both satisfy the same 
Darcy's law as in \eqref{prop-hat-V}. Moreover, for any $t$, they are not exactly the same. 
(Otherwise, $w_1(\xi+c_s t_*)\equiv \hat{V}(\xi+c_s t_*, t_*)\equiv q_1(\xi -z_n)$, 
and so $\tau({w_1})=\tau(q_1)$.) 
Therefore, the intersection number argument is applied and so 
$v(\xi+c_s t_*, s_n + t_n+t_*)$ has no degenerate intersection point with $q_1 (\xi- z_n)$, contradicts 
\eqref{multiple-zero-t} again. 

We have proved \eqref{tau-w=tau-q}. By \eqref{prop-hat-V} we see that $w_1$ is monotonically 
decreasing and with values in $(0,\hat{\varphi}_1)$. Therefore, $q_1$ can not be of Type II, III, or IV.
It must be of Type I with suitable shift.

This proves the lemma. 
\end{proof}

\subsubsection{Proof of the latter part of Theorem \ref{thm2}}
By Lemma \ref{lem:V==Q}, for any $t_*<0$, there holds $\hat{V}(\xi,t_*)\equiv V_{c_s}(\xi-y_*)$. 
Since both $\hat{V}(\xi, t+t_*)$ and $V_{c_s}(\xi-y_*)$ solutions of the Cauchy problem 
of \eqref{tilde-v-eq}, and the latter is a stationary one, we have   
$\hat{V}(\xi, t+t_*) \equiv V_{c_s}(\xi-y_*)$ for all $t\geq 0$. In particular, 
at $(\xi,t)=(0,-t_*)$ we have $V_{c_s}(-y_*) = \Lambda(s_*)$, that is, $-y_*$ is nothing but the 
$\Lambda(s_*)$-level set of $V_{c_s}$, and so is uniquely defined. By the 
arbitrariness of $t_*<0$ and the uniqueness of $y_*$, we actually have
$$
v(\xi+\chi_*(s), s) \to V_{c_s}(\xi-y_*) \mbox{\ \ as\ \ }s\to \infty,
$$
in the $C^{\alpha}_{loc}(\R)$ topology. In particular, due to 
$$
v(y_*+\chi_*(s), s) \to V_{c_s}(0)=0 \equiv v(r(s),s),
$$
we have $r(s)-\chi_*(s)\to y_*\ (s\to \infty)$.  Together with the conclusion in Lemma 
\ref{lem:dht-o} we have $r(s)=c_s s + o(s)$ as $s\to \infty$. 
In addition, 
\begin{eqnarray*}
v(\xi + r(s),s) & = & [v(\xi +\chi_*(s) +(r(s)-\chi_*(s)),s) - v(\xi+\chi_*(s) + y_*,s)] \\
& &  + v(\xi+\chi_*(s) + y_*,s) \to V_{c_s}(\xi)\quad \mbox{\ \ as\ \ }s\to \infty,
\end{eqnarray*}
in the $C^{\alpha}_{loc}(\R)$ topology.  

This proves the latter part of Theorem \ref{thm2}. \qed

\section{Asymptotic Behavior of the Solution in the Range $[s_1,1]$}

In this section, we give the convergence of $u$ in the range $[s_1,1]$ when big spreading happens.

\medskip
\noindent
{\it Proof of Theorem \ref{thm3}}. 
(i) When $c_s < c_z$, we can use a similar argument as proving Lemma 
\ref{lem:middle-est} (see, also \cite[Lemma 6.5]{DL}, or \cite[Lemma 3.2]{DQZ}, \cite[Lemma 5.1]{G}) to show that, 
for any $\tilde{c}\in(0,c_s)$, there exist $\delta, M, T>0$ such that
\begin{equation}\label{top-est}
\left\{
\begin{array}{l}
 u(x,t)\leq 1+Me^{-\delta t} \text{ for all } x\geq 0 \text{ and } t\geq T, \\
 u(x,t)\geq 1-Me^{-\delta t} \text{ for all } x\in[0,\tilde{c}t] \text{ and } t\geq T.
\end{array}
\right.
\end{equation}
With this decay rate in the middle part, we can use the same idea as in Step 2 in 
subsection 4.1 to construct super- and sub-solutions by the big sharp wave $Q_{c_b} (x-c_b t)$. 
(This is possible essentially due to the asymptotic stability of $1$ and the fact 
that $(Q_{c_b})_{x}$ has negative upper bound near its front.) The conclusion (i) 
is then proved as in subsection 4.1.

(ii) When $c_s >c_z$, the big spreading solution $u$ is actually characterized by a propagating terrace. 
In the range $[0,s_1]$, the asymptotic behavior of $u$ has been specified 
in Theorem 1.2 (iii). We now consider the range $[s_1, 1]$. 
Using the traveling wave $Q_{c_z}(x-c_z t)$, one can construct sub- and super-solutions 
as in \cite{FM} (see also subsection 4.1), and then prove the conclusion (ii)
in a similar way as in the previous section.
(Note that, the equation is non-degenerate in the range $[s_1,1]$.)

(iii) Let us now consider the critical case: $c_s =c_z$. As we have seen in subsection 
4.2, the approach in this case is quite different and complicated. 
Fix $s_*\in (0,s_1)$ and $s^*\in (s_1,1)\backslash \{s_2\}$, recall that we use $\chi_*(t)$ and $\chi^*(t)$ to 
denote the right $s_*$-level set and $s^*$-level set of $u(\cdot,t)$, respectively. 
For any time sequence $\{t_n\}$ tending to infinity, by parabolic estimates we may assume that 
(by taking a subsequence if necessary)  
\begin{equation}\label{un-to-U-iii}
 u(x+\chi^* (t_n), t+t_n) \to U(x,t)\mbox{\ \ as\ \ }n\to \infty,
\end{equation}
in the topology of $C^{2+\alpha,1+\alpha/2}_{loc}(\R^2)$, for some $\alpha\in (0,1)$ and entire solution $U$.
By the monotonicity of $u(\cdot,t)$ in $[b,r(t)]$, Lemmas \ref{lem:dht-to-infty} and \ref{lem:dht-o} 
we see that $U$ satisfies  
$$
U(0,0)=s^*,\quad U(-\infty,t)\leq 1,\quad U(+\infty,t) \geq s_*,\quad U_x(x,t)<0 \mbox{ for all }x,t\in \R.
$$
Since $s_*\in (0,s_1)$ is arbitrarily chosen we even have $U(+\infty, t)\geq s_1$.

Using the moving frame $\xi:= x-c_z t$,  we denote 
$$
\hat{u}_n (\xi,t):= u(\xi + c_z t +\chi^* (t_n), t+t_n)|_{E_n},\qquad \hat{U}(\xi,t):= U(\xi+c_z t,t),
$$
where $E_n:= \{(\xi,t) \mid 0 \leq \xi +c_z t +\chi^*(t_n) \leq \chi_*(t+t_n),\ t>-t_n\}$.
Then all of $\hat{u}_n$ and $\hat{U}$ are solutions of $u_t = [A(u)]_{\xi\xi} + c_z u_{\xi} + f(u)$. 
Note that all the stationary solutions of this equation correspond to traveling waves  
of the equation in \eqref{p}. They are divided into four types: those with increasing profiles 
in $[s_1,1]$; those with oscillation profiles in $[s_1,1]$; 
the traveling wave $Q_{c_z}$; monotonically decreasing ones with values bigger than
$1$ on the left side. Using the same approach as in subsections 4.2.3 and 4.2.4 
(see also section 6 in \cite{Polacik-MAMS}) we conclude that $\hat{U}$ must coincide with one 
of them. Combining together with the properties of $U$ we derive  
$$
\hat{U}(\xi,t)\equiv Q_{c_z} (\xi-y^*), \qquad \mbox{ where } y^* \mbox{ satisfies } Q_{c_z}(-y^*)=s^*.
$$
By the uniqueness of $y^*$ and $Q_{c_z}$, the convergence 
\eqref{un-to-U-iii} can be improved as  
\begin{equation}\label{un-to-U-iii-1}
u(x +\chi^* (s), t+s) \to U(x,t) \equiv \hat{U}(x-c_z t,t) \equiv Q_{c_z}(x-c_z t -y^*)\quad \mbox{\ \ as\ \ }s\to \infty,
\end{equation}
in the topology of $C^{2+\alpha,1+\alpha/2}_{loc}(\R^2)$. In particular, 
$$
u_x (\chi^*(s),s)\to Q'_{c_z} (-y^*) <0,\qquad u_t (\chi^*(s),s)\to -c_z Q'_{c_z}(-y^*)\quad \mbox{\ \ as\ \ }t\to \infty.
$$
Differentiating $u(\chi^*(s),s)\equiv s^*$ in $s$ we have 
$$
(\chi^*)'(s) = -\frac{u_t (\chi^*(s),s)}{ u_x (\chi^*(s),s)} \to c_z \quad \mbox{\ \ as \ \ }s\to \infty.
$$
Denote $\theta_+(s):= \chi^*(s) -c_z s +y^*$, we see that $\theta'_+(s)\to 0\ (s\to \infty)$, and 
$$
u(x + c_z s +\theta_+(s), s)  =  u(x+y^*+\chi^*(s),s) \to Q_{c_z}(x)\quad \mbox{\ \ as\ \ }s\to \infty,
$$
in the topology of $C^2_{loc}(\R)$. This proves the first limit in \eqref{cs=cz=upper}. The second one 
is proved similarly.

This completes the proof of Theorem \ref{thm3}. \qed

\section{Stationary Solutions and Traveling Waves}\label{sec:sstw}
As a supplement, in this section we provide detailed construction for stationary and traveling wave solutions of the equation, by using the phase plane analysis. One will see that our construction is much more complicated than the cases for reaction diffusion equations and that for porous media equations. The difficulties are mainly caused by the general degenerate term $A(u)$.

\subsection{Positive stationary solutions}
In this subsection, we apply the phase plane analysis to give the positive stationary solution $u=q(x)$ of the equation in \eqref{p}.
We rewrite the equation $[A(q)]''+f(q)=0$ in the equivalent form:
\begin{equation}\label{tt-two}
	\begin{cases}
		p=[A(q)]'=A'(q)q',\\
		p'=-f(q),
	\end{cases}
\end{equation}
or a first order equation
$$
\frac{dp}{dq}=-\frac{A'(q) f(q)}{p}.
$$
Its first integral is
\begin{align}\label{shoucijifen}
	p^{2}=C-2\int_{0}^{q} A'(r) f(r)dr,
\end{align}
for suitable $C$. We consider only the trajectories in the region $\{q\geq0\}$ which correspond to nonnegative solutions. \\
\textbf{Case~A.}~Constant solutions $U_q(x)$. In the $qp$-plane, the points $(0, 0)$, $(s_{1},0)$, $(s_{2},0)$, $(1, 0)$ are singular ones. They correspond to constant stationary solutions $U_{q}(x)\equiv{q}$ for $q$~=~0, $s_{1}$, $s_{2}$, 1.

\noindent
\textbf{Case~B.}~Ground state solution $U^*(x)$. We consider the trajectory corresponding to $C=C_* :=2\int_0^{s_1} A'(r) f(r)dr$. Its formula is
\begin{equation}\label{homo-clinic}
p^{2}=-2\int_{s_1}^{q} A'(r) f(r)dr.
\end{equation}
It is a homoclinic trajectory starting at ($s_{1}$, 0), crossing ($\theta$, 0) and then tending to $(s_1,0)$, where $\theta\in{(s_{2}, 1)}$ is given by
$$
\int_{s_{1}}^{\theta} A'(r) f(r)dr=0.
$$
Assume it is normalized by $q(0)=\theta,\; q'(0)=0$, and the support of $q(x)-s_1$ is $[-L,L]$ for some~$L\in{(0,\infty]}$. We now show that $L=\infty$ under the assumption $f'(s_1)<0$.

When $x$ is close to $-L$, we have $s_1<q(x)< s_1 +\delta$ for any sufficiently small $\delta>0$. So
$$
A'(s_1)q'(x) \leq A'(q)q'(x) = p (x) = \left( -2\int_{s_1}^{q} A'(r) f(r)dr \right)^{1/2} < \kappa (q-s_1),
$$
where $\kappa = 2\sqrt{-A'(s_1)f'(s_1)}$. Integrating the first equation in \eqref{tt-two} we have
\begin{eqnarray*}
L & = & \int_{-L}^0 dx = \int_{s_1}^\theta \frac{A'(q) dq}{p(q)} =
\int_{s_1}^{s_1 +\delta} \frac{A'(q) dq}{p(q)} + \int_{s_1 +\delta}^\theta \frac{A'(q) dq}{p(q)} \\
& \geq & \int_{s_1}^{s_1 +\delta} \frac{A'(s_1) dq}{\kappa (q-s_1)} + \int_{s_1 +\delta}^\theta \frac{A'(q) dq}{p(q)} =\infty.
\end{eqnarray*}
This means that $U^*(x)>0$ for all $x\in \R$. It is the analogue of the ground state solution in bistable reaction diffusion equations, and is denoted by $U^*(x)$.

\noindent
\textbf{Case~C.} Solution with compact supports. When we take $0<{C}<C_*$ in the first integral,~the trajectory~$\Gamma_{1}$:~$p^{2}+2\int_{0}^{q}  A'(r) f(r)dr=C$~is a connected curve, symmetric with respect to the $q$-axis, which connects the points $(0,\sqrt{C})$,~$({q_{1}},0)$~and $(0,-\sqrt{C})$ for a unique ${q_{1}}\in{(0,s_{1})}$.
The trajectory corresponds to a stationary solution $U_{q_{1}}(x)$, under the normalized condition $U_{q_{1}}'(0)=0$, it satisfies
$$
U_{q_1}(0)=q_1,\quad U_{q_1}(\pm L_1)= 0, \quad U'_{q_1}(x)<0 \quad\text{for}~x\in{(0,L_{1})}.
$$
In addition, by \eqref{shoucijifen} we have
$$
[A(U_{q_1})] '(x)= - \Big(C-2\int_{0}^{U_{q_1}} A'(r) f(r) dr\Big)^{1/2} \to -{\sqrt{C}},\mbox{\ \ as\ } x\to L_1-0.
$$
Using \eqref{shoucijifen} again, it is not difficult to show the following result.

\begin{lem}\label{bdlkd}
If $f'(0)>0$, then $L_1\to0$ as $q_1\to0$.
\end{lem}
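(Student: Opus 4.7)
The plan is to derive an explicit integral formula for $L_1$ from the first integral \eqref{shoucijifen} and then estimate its behavior as $q_1\to 0$ via a scaling argument that uses the structural assumptions on $A$ and $f$ near the origin. On $(0,L_1)$ the profile $U_{q_1}$ is strictly decreasing, so $p=A'(q)q'<0$; the normalization $p=0$ at $q=q_1$ forces $C=2\int_0^{q_1}A'(r)f(r)\,dr$, and $dx=A'(q)\,dq/p$ yields
\begin{equation*}
L_1=\int_0^{q_1}\frac{A'(q)\,dq}{\sqrt{2\int_q^{q_1}A'(r)f(r)\,dr}}.
\end{equation*}

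The core of the argument is to bound the numerator and denominator of this integrand by pure powers of $r$ and $q_1$. Since $f(0)=0$ and $f'(0)>0$, for $q_1$ small enough one has $f(r)\geq \tfrac12 f'(0)r$ on $[0,q_1]$, so the denominator is bounded below by a constant multiple of $\sqrt{\int_q^{q_1} rA'(r)\,dr}$. For $A'$, I would exploit the assumption $rA''(r)/A'(r)\to A_*>0$: writing $(\ln A'(r))'=A''(r)/A'(r)$ and integrating, one gets for any $\varepsilon\in(0,A_*/3)$ constants $c_1,c_2,\delta>0$ with
\begin{equation*}
c_1 r^{A_*+\varepsilon}\leq A'(r)\leq c_2 r^{A_*-\varepsilon}\qquad \text{for } r\in(0,\delta).
\end{equation*}

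The final step is the rescaling $q=q_1 s$. Inserting the bounds above and carrying out the substitution reduces $L_1$ to a power of $q_1$ times a dimensionless integral:
\begin{equation*}
L_1\leq C\,q_1^{(A_*-3\varepsilon)/2}\int_0^1\frac{s^{A_*-\varepsilon}\,ds}{\sqrt{1-s^{A_*+2+\varepsilon}}},
\end{equation*}
where the $s$-integral is a finite Beta-type integral (the singularity at $s=1$ is square-root integrable since $1-s^{A_*+2+\varepsilon}$ vanishes to first order there, and $s^{A_*-\varepsilon}$ is integrable at $s=0$). Choosing $\varepsilon=A_*/4$ makes the exponent of $q_1$ strictly positive, and the conclusion $L_1\to 0$ follows.

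The main (mild) obstacle is that the hypothesis on $A$ only controls the logarithmic derivative of $A'$, so $A'(r)$ need not behave like an exact power of $r$ (it is, in general, only regularly varying of index $A_*$). This is why I work with two-sided power bounds with a small loss $\varepsilon$ rather than with sharp asymptotics; the loss is absorbed by the strict positivity of $A_*$ and does not affect the conclusion.
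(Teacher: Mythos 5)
Your proposal is correct and fills in a gap the paper leaves open: the paper merely asserts that the lemma follows "using \eqref{shoucijifen} again" and gives no proof, so there is nothing to compare against line by line. Your integral formula
\[
L_1=\int_0^{q_1}\frac{A'(q)\,dq}{\sqrt{2\int_q^{q_1}A'(r)f(r)\,dr}}
\]
is correctly derived from the first integral with $C=2\int_0^{q_1}A'(r)f(r)\,dr$, the two-sided power bounds on $A'$ obtained by integrating the logarithmic derivative are a standard and valid use of the hypothesis $rA''(r)/A'(r)\to A_*>0$, and the rescaling $q=q_1 s$ produces the exponent $(A_*-3\varepsilon)/2$, which is positive for $\varepsilon<A_*/3$; the Beta-type integral in $s$ is indeed finite because the singularity at $s=1$ is of order $(1-s)^{-1/2}$ and $s^{A_*-\varepsilon}$ is integrable at $0$.

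One remark worth making: the power bounds on $A'$ (and hence the regular-variation machinery) are not strictly needed. Splitting the integral at $q_1/2$, using $f(r)\geq\tfrac12 f'(0)r$, the convexity of $A$, and the monotonicity of $A'$, one obtains $L_1\leq C\sqrt{A'(q_1)}$ directly, and this tends to $0$ simply because $A'(0)=0$. So the hypothesis $rA''(r)/A'(r)\to A_*>0$ is actually not essential for this particular lemma; your argument uses it but need not. This is a stylistic point, not a gap — your proof is complete and correct as written.
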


%
%

Similarly, when we take $C_*<{C}<C^*:=2\int_{0}^{1}  A'(r) f(r)dr$,~the trajectory corresponds to a stationary solution $U_{q_{2}}(x)$, which satisfies
$$
U_{q_{2}}(0)=q_{2},\;\;\; U'_{q_2}(0)=0,\quad  U_{q_{2}}(\pm{L_{2}})=0,~L_{2}>0,\;\;\; U_{q_{2}}'(x)<0\quad\text{for}~x\in{(0,L_{2})}.
$$

\noindent
\textbf{Case~D.}~Monotonic solution. When $C=C_*$, besides the homoclinic orbit, there are two other trajectories, whose functions are
$$p={P_{1}(q):={\Big(2\int_{q}^{s_{1}}  A'(r) f(r) dr}\Big)}^{1/2},~p=-{P_{1}(q)},~0\leq{q}<{s_{1}}.$$
They connect regular points $(0,\pm{\sqrt{C_{1}}})$ with the singular point $(s_{1},0)$. Denote the corresponding stationary solutions by $U_1^+(x)$ and $U_1^-(x):= U_1^+(-x)$, where
$$
U_{1}^{+}(0)=0,\;\;\; U_{1}^{+}(x)>0~\text{for}~x>0,\;\;\; \text{and}
~U_{1}^{+}(x)\nearrow{s_{1}}~\text{as}~x\rightarrow{+\infty}.
$$

When $C=C^*$, there are two trajectories which correspond to two solutions $U_2^+(x)$ and $U_2^-(x):= U_2^+(-x)$, where
$$
U_{2}^{+}(0)=0,\;\;\; U_{2}^{+}(x)>0~\text{for}~x>0,\;\;\; \text{and}~U_{2}^{+}(x)\nearrow{1}~\text{as}~x\rightarrow{+\infty}.
$$

\noindent \textbf{Case~E.}~Periodic solution. In the $qp$-phase plane, the singular point $(s_{2},0)$~is a center.~There are infinitely many closed trajectories surrounding it. Each of them corresponds to a positive stationary periodic solution $U_{per}(x)$.

\subsection{Traveling Waves}
We now use the phase plane analysis to investigate the traveling waves of the equation in \eqref{p}.
It is convenient to use the generalized pressure equation of $v$:
\begin{equation}\label{vtwfc}
v_t = B(v)v_{xx} + v_x^2 + h(v),
\end{equation}
where $B$ and $h$ are given by \eqref{u-to-v}. It is obvious that $B,h\in C^2((0,\infty))$. On their smoothness at $v=0$ we have the following lemma.

\begin{lem}\label{lem:B-h}
If we make the supplementary definition: $\Lambda (0)=\lambda (0)=0$ and $h(0)=0$, then the properties in \eqref{prop-B-h} hold.
\end{lem}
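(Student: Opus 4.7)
The plan is to verify each assertion in \eqref{prop-B-h} by direct computation, relying on the chain rule together with the key hypothesis $rA''(r)/A'(r)\to A_*$ as $r\to 0+$ from the class $\mathcal{A}$.

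First, I would record the elementary consequences of the definitions. Since $\Lambda'(u)=A'(u)/u$ for $u>0$ and $\Lambda$ is strictly increasing with $\Lambda(0)=0$, its inverse $\lambda$ satisfies $\lambda(0)=0$ and is continuous and strictly increasing on $[0,\infty)$, with
\[
\lambda'(v)=\frac{1}{\Lambda'(\lambda(v))}=\frac{\lambda(v)}{A'(\lambda(v))}, \qquad v>0.
\]
In particular, $B(v)=A'(\lambda(v))=\lambda(v)/\lambda'(v)$, which is one of the stated identities, and $B(0)=A'(0)=0$. On $(0,\infty)$, both $B$ and $h$ are smooth compositions of smooth functions, so all we have to worry about is behavior at $v=0$.

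Next, I would differentiate. From the chain rule and the formula for $\lambda'(v)$,
\[
B'(v)=A''(\lambda(v))\,\lambda'(v)=\frac{\lambda(v)A''(\lambda(v))}{A'(\lambda(v))}, \qquad v>0.
\]
This is positive for $v>0$ (since $A''>0$ and $\lambda,A'>0$ there) and, by the hypothesis on $A$, tends to $A_*$ as $v\to 0+$, since $\lambda(v)\to 0$. The identity
\[
\frac{B(v)-B(0)}{v}=\frac{B(v)}{v}=B'(\xi_v) \text{ for some } \xi_v\in(0,v)
\]
then gives $B'(0)=A_*$, so $B'$ extends continuously to $[0,\infty)$ and $B\in C^1([0,\infty))$. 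For $h$, writing $u=\lambda(v)$ and using $dv/du=A'(u)/u$,
\[
h'(v)=\frac{d}{du}\!\left[\frac{f(u)A'(u)}{u}\right]\cdot\frac{u}{A'(u)}
=f'(u)-\frac{f(u)}{u}\!\left(1-\frac{uA''(u)}{A'(u)}\right).
\]
Since $f(0)=0$ so $f(u)/u\to f'(0)$, and $uA''(u)/A'(u)\to A_*$, we obtain $h'(v)\to f'(0)-f'(0)(1-A_*)=f'(0)A_*$ as $v\to 0+$; the same mean-value argument as above then promotes this to $h'(0)=f'(0)A_*$ and gives $h\in C^1([0,\infty))$.

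The only mild obstacle is justifying that the one-sided derivatives at $v=0$ really exist (rather than just the one-sided limit of $B'$ and $h'$); this is handled uniformly by the mean-value trick used above, which requires only the continuity of $B'$ and $h'$ on $(0,\infty)$ and the existence of the limits at $0+$. Every other piece is an algebraic identity or an immediate consequence of the hypothesis $rA''(r)/A'(r)\to A_*>0$ and the sign conditions in $\mathcal{A}$, so no further technical ingredients are needed.
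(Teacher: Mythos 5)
Your proof is correct and follows essentially the same route as the paper: both rely on the identities $\lambda'(v)=\lambda(v)/A'(\lambda(v))$ and $B(v)=\lambda(v)/\lambda'(v)$, compute $B'(v)$ and $h'(v)$ in closed form for $v>0$, and then pass to the limit using the hypothesis $rA''(r)/A'(r)\to A_*$. The only minor stylistic difference is in how you obtain the one-sided derivative at $v=0$: the paper computes $B'(0+0)$ and $h'(0+0)$ directly from the difference quotient via L'H\^opital (turning $B(v)/v$ into $A'(u)/\Lambda(u)$ with $u=\lambda(v)$), whereas you establish $\lim_{v\to0+}B'(v)=A_*$ and $\lim_{v\to0+}h'(v)=f'(0)A_*$ and then invoke the mean-value theorem to conclude that the derivatives at $0$ exist and equal these limits. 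Both arguments are valid and of comparable length; the MVT trick does implicitly require first checking that $B$ and $h$ are continuous at $0$ (the paper verifies $h(0+0)=0$ explicitly; you leave this as an ``immediate consequence''), but that is a triviality given $f(u)/u\to f'(0)$ and $A'(u)\to 0$.
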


\begin{proof}
Differentiating $\Lambda(\lambda(v))\equiv v$ twice we have
\begin{equation}\label{Lambda-lambda}
\Lambda'(\lambda(v))\lambda'(v)\equiv 1,\qquad \Lambda''(\lambda(v))[\lambda'(v)]^2 = -\Lambda'(\lambda(v))\lambda''(v),\quad v>0.
\end{equation}
By the definition \eqref{def-pressure} we have
\begin{equation}\label{Lambda'}
u \Lambda'(u) = A'(u),\qquad \Lambda'(u) + u\Lambda''(u) = A''(u),\quad u>0.
\end{equation}
So $B(0)=A'(\lambda(0))=A'(0)=0$, and for $v>0$,
$$
B(v) := A'(\lambda(v)) = \lambda(v) \Lambda'(\lambda(v)) = \frac{\lambda(v)}{\lambda'(v)}.
$$
Thus
$$
B'(0+0)  =  \lim\limits_{v\to 0+0} \frac{A'(\lambda(v))}{v} = \lim\limits_{u\to 0+0} \frac{A'(u)}{\Lambda(u)} =  \lim\limits_{u\to 0+0} \frac{A''(u)}{\Lambda'(u)}
=  \lim\limits_{u\to 0+0} \frac{u A''(u)}{A'(u)} = A_*,
$$
by the assumption in \eqref{ass-A}. On the other hand, when $v>0$ we have
$$
B'(v)=A''(\lambda(v))\lambda'(v) = \frac{A''(\lambda(v))}{\Lambda'(\lambda(v))} = \frac{\lambda(v)A''(\lambda(v))}{A'(\lambda(v))}\to A_*\mbox{\ \ as\ \ } v\to 0+0,
$$
and so $B\in C^1([0,\infty))$.

Denote $a_0:= f'(0)$. We now study $h(v)$. First,
$$
\lim\limits_{v\to 0+0} h(v) = \lim\limits_{v\to 0+0} \frac{f(\lambda(v))}{\lambda'(v)} =
\lim\limits_{v\to 0+0} a_0 \lambda(v) \Lambda'(\lambda(v)) = a_0 \lim\limits_{v\to 0+0} A'(\lambda(v))=0.
$$
Next we have
\begin{eqnarray*}
h'(0+0) & = & \lim\limits_{v\to 0+0} \frac{f(\lambda(v))}{v\lambda'(v)} = \lim\limits_{v\to 0+0}  \frac{a_0 A'(\lambda(v))}{v} =  a_0 \lim\limits_{u\to 0+0} \frac{A'(u)}{\Lambda(u)} \\
& = & a_0 \lim\limits_{u\to 0+0} \frac{A''(u)}{\Lambda'(u)}
= a_0 \lim\limits_{u\to 0+0} \frac{u A''(u)}{A'(u)} = a_0 A_*.
\end{eqnarray*}
Finally, with $u=\lambda(v)$ we have
\begin{eqnarray*}
h'(v) & = & f'(\lambda(v)) - f(\lambda(v))\frac{\lambda''(v)}{[\lambda'(v)]^2} =
f'(u) + \frac{f(u)}{u} \cdot \frac{u \Lambda''(u)}{\Lambda'(u)} \\
& =  & f'(u) + \frac{f(u)}{u} \cdot \frac{ A''(u) -\Lambda'(u)}{\Lambda'(u)}
 =   f'(u) - \frac{f(u)}{u} + \frac{f(u)}{u} \cdot \frac{ u A''(u)}{A'(u)},
\end{eqnarray*}
so $ h'(v) \to a_0A_*$ as $v\to 0+0$. This proves $h\in C^1([0,\infty))$.
\end{proof}

For any $c>0$, we consider the traveling wave $v(x,t)=\varphi (x-ct)$, then the pair $(\varphi (\zeta),c) $ solves
\begin{align}\label{vtwq}
B(\varphi) \varphi_{\zeta\zeta}+\varphi_{\zeta}^{2}+c\varphi_{\zeta}+h(\varphi)=0,
\end{align}
For the purpose of studying the phase portrait, we rewrite equation \eqref{vtwq} as a pair of first order equations:
\begin{equation}\label{qptw}
\begin{cases}
	\varphi'= \psi,  \\
	\psi'=\displaystyle{\frac{-c\psi - \psi^2- h(\varphi)}{B(\varphi)}},
\end{cases}
\end{equation}\\
where $'$ denotes $d/d\zeta$. To eliminate the singularity at $\varphi=0$, introduce the new independent variable $\xi$ defined by the symbolic equation $\frac{d\zeta}{d\xi}= B(\varphi)$. Then the system \eqref{qptw} becomes
\begin{equation}\label{qptwb}
\begin{cases}
	\dot \varphi =B(\varphi) \psi,   \\
	\dot \psi =-\psi^{2}-c\psi -h(\varphi),
\end{cases}
\end{equation}
where $\cdot$ denotes $d/d\xi$. Note that \eqref{qptwb} is not singular and equivalent to \eqref{qptw} for $\varphi>0$ and has the same trajectories in the positive half plane $H=\{(\varphi,\psi)~|~\varphi>0,-\infty<\psi<+\infty\}$.
Set
$$
\hat{\varphi}_0 = \hat{\varphi}_4 = 0,\quad \hat{\varphi}_i = \Lambda (s_i) := \int_0^{s_i} \frac{A'(r)}{r} dr\ \ (i=1,2,3) \mbox{\ \ with\ } s_3 :=1.,
$$
and
$$
\hat{\psi}_0 = \hat{\psi}_1 = \hat{\psi}_2 = \hat{\psi}_3 = 0,\quad \hat{\psi}_4 := -c.
$$
Then the system \eqref{qptwb} has exactly 5 singular points: $R_i(\hat{\varphi}_i, \hat{\psi}_i)\ (i=0,1,2,3,4)$.  Linearizing the system \eqref{qptwb} at $R_i$ we obtain the associate Jacobian
$$
J(R_i)=
\begin{pmatrix}
	A_* \hat{\psi}_i  &  \hat{b}_i  \\
	- \hat{a}_i  & -2 \hat{\psi}_i-c
\end{pmatrix}
$$
where
$$
\hat{b}_i := B(\hat{\varphi}_i)\ \ (i=0,1,2,3,4),\quad \mbox{ and so }\ \ \hat{b}_0 = \hat{b}_4 =0,\ \ \hat{b}_1, \hat{b}_2, \hat{b}_3>0,
$$
$$
\hat{a}_0 = \hat{a}_4 := a_0 A_*,\quad \hat{a}_i := h'(\hat{\varphi}_i) = f'(s_i)\ \ (i=1,2,3),\quad
\mbox{ and so } \ \ \hat{a}_1,  \hat{a}_3<0,\  \hat{a}_2>0.
$$
Therefore, we have the following conclusions:
\begin{enumerate}[$(1)$]
	\item $R_0$ is topologically a node in the right $\varphi\psi$-plane by using the method employed in \cite{ALGM}, and $J(R_0)$ has the eigenvalues $0$ and $-c$ with the corresponding eigenvectors $(c,-a_0 A_* )$ and $(0,1)$, respectively;

	\item $R_1$ is saddle, and $J(R_1)$ has the eigenvalues $\lambda_1^\pm (c)=(-c\pm \sqrt{c^2-4 \hat{a}_1 \hat{b}_1} ) /2$ with the corresponding eigenvectors $(\hat{b}_1, \lambda_1^\pm(c))$, and $\lambda^+_1(c)$ is strictly decreasing in $c>0$;
	
\item $J(R_2)$ has the eigenvalues $\lambda_2^\pm(c)=(-c\pm \sqrt{c^2-4 \hat{a}_2 \hat{b}_2 })/2$, so there are three different cases: if $c^2<4 \hat{a}_2 \hat{b}_2 $, $R_2$ is a focus; if $c^2=4\hat{a}_2 \hat{b}_2 $, $R_2$ is an unidirectional node; if $c^2>4 \hat{a}_2 \hat{b}_2 $, $R_2$ is a bidirectional node.
	
\item $R_3$ is saddle, and $J(R_3)$ has the eigenvalues $\lambda_3^\pm(c)=(-c\pm\sqrt{c^2- 4\hat{a}_3 \hat{b}_3 })/2$ with the corresponding eigenvectors $(\hat{b}_3,\lambda_3^\pm(c))$, and $\lambda^+_3(c)$ is strictly decreasing in $c>0$;

	\item $R_4$ is a saddle, and $J(R_4)$ has the eigenvalues $c$ and $-c A_*$ with the corresponding eigenvectors $(0,1)$ and $(c(A_*+1), a_0 A_*)$, respectively.
\end{enumerate}

Now we study the trajectory $\Gamma^c$ starting from $R_1$ and entering the domain $D:= \{(\varphi,\psi) \mid 0\leq \varphi \leq \hat{\varphi}_1,\ \psi <0\}$.
Denote the corresponding function by $\psi= \psi(\varphi;c)$, and denote the contact
point between $\Gamma^c$ and the negative $\psi$-axis by $P(0,\psi^c)$, if it exists.
We have the following results.

\begin{lem}\label{lem:3-TW}
For any $c'> c\geq 0$, the following conclusions hold.
\begin{enumerate}[{\rm (i)}]
\item $\Gamma^{c'}$ lies above $\Gamma^c$ as long as they stay in $D$;
\item When $c>0$ is sufficiently small, $\Gamma^c$ either goes down to $-\infty$ along the right side of the $\psi$-axis, or it contacts the negative $\psi$-axis at point $P(0,\psi^c)$ with  $\psi^c<-c$;
\item When $c>0$ is sufficiently large, $\Gamma^c$ either tends to $R_0$ from the right-down side, or it contacts the negative $\psi$-axis at point $P(0,\psi^c)$ with  $\psi^c > -c$;
\item $\psi^c$ is strictly increasing in $c$ as long as $\psi^c\in (-\infty,0)$.
\end{enumerate}
\end{lem}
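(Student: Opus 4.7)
The plan is to analyse the phase-plane trajectory $\Gamma^c$ leaving the saddle $R_1=(\hat{\varphi}_1,0)$ along its unstable eigendirection that enters $D$. Parametrising it by $\varphi$ as $\psi=\psi(\varphi;c)$ gives
\begin{equation*}
\frac{d\psi}{d\varphi}=\frac{-\psi^2-c\psi-h(\varphi)}{B(\varphi)\psi},\qquad \psi(\hat{\varphi}_1;c)=0,
\end{equation*}
with initial slope $\lambda_1^+(c)/\hat{b}_1$. Two structural observations will be used repeatedly: $\varphi$ is strictly decreasing along $\Gamma^c$ since $\dot\varphi=B(\varphi)\psi<0$ in $D$; and $\psi$ cannot return to $0$ inside $(0,\hat{\varphi}_1)$ because at $\psi=0$ one has $\dot\psi=-h(\varphi)<0$. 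For (i), $\lambda_1^+(c)$ is strictly decreasing in $c$, hence $\Gamma^{c'}$ lies above $\Gamma^c$ in a left neighbourhood of $\hat{\varphi}_1$. If they met at the largest such $\varphi_*<\hat{\varphi}_1$, the ODE yields
\begin{equation*}
\frac{d(\psi_{c'}-\psi_c)}{d\varphi}\bigg|_{\varphi_*}=-\frac{c'-c}{B(\varphi_*)}<0,
\end{equation*}
while the positivity of $\psi_{c'}-\psi_c$ on $(\varphi_*,\hat{\varphi}_1)$ forces the right derivative at $\varphi_*$ to be nonnegative, a contradiction.

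For (ii), I would use $\Gamma^0$ as a baseline. The first integral from Case~B, transcribed into the pressure variable using $[A(q)]'=\lambda(\varphi)\psi$, reads $\lambda(\varphi)^2\psi^2/2=\int_{\lambda(\varphi)}^{s_1}A'(r)f(r)\,dr$ along $\Gamma^0$, so $\psi^0(\varphi)\to-\infty$ as $\varphi\to 0+$ thanks to $\int_0^{s_1}A'(r)f(r)\,dr>0$. Fix $\varphi_0>0$ with $\psi^0(\varphi_0)<-N$ for a large $N$; continuous dependence on $c$ on the compact set $[\varphi_0,\hat{\varphi}_1]$ (where $B$ is bounded away from $0$) gives $\psi^c(\varphi_0)<-N/2$ for all sufficiently small $c$. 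Choosing $c<N/2$ yields $\psi^c(\varphi_0)<-c$, so $\Gamma^c$ drops below the line $\psi=-c$ at some $\varphi_1\in(\varphi_0,\hat{\varphi}_1)$. Since on the segment $\{\psi=-c,\ 0<\varphi<\hat{\varphi}_1\}$ the vector field satisfies $\dot\psi=-h(\varphi)<0$, $\Gamma^c$ can only cross this line downward and then stays below, yielding either $\psi^c=-\infty$ or a contact point $P(0,\psi^c)$ with $\psi^c<-c$.

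For (iii), I would use a horizontal invariant-region argument. Let $H_0:=\max_{[0,\hat{\varphi}_1]}h<\infty$ and take $c>2\sqrt{H_0}$. On $\{\psi=-c/2,\ \varphi\in[0,\hat{\varphi}_1]\}$ one computes $\dot\psi=c^2/4-h(\varphi)>0$, so $\{\psi>-c/2\}$ is positively invariant. Since $\Gamma^c$ starts at $R_1$ with $\psi=0$, it remains in this half-plane; a Poincaré--Bendixson argument (using the monotonicity of $\varphi$ together with the fact that $R_0$ is the only singular point of the vector field on the segment $\{0\}\times[-c/2,0]$) identifies the $\omega$-limit as $\{R_0\}$, with approach along the centre direction $(c,-a_0A_*)$ of $J(R_0)$. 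In any event $\psi^c\ge -c/2>-c$, giving the two alternatives. For (iv), (i) already yields $\psi^{c'}\ge\psi^c$ whenever $c'>c$; to obtain strict inequality when $\psi^c\in(-\infty,0)$, pass to the limit $\xi\to\infty$ in $\dot\psi=-\psi^2-c\psi-h(\varphi)$: convergence of $\psi$ forces $\dot\psi\to 0$, hence $\psi^c(\psi^c+c)=0$, so $\psi^c=-c$. If the same held for $c'$ one would get $\psi^{c'}=-c'<-c=\psi^c$, contradicting (i).

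The hardest part is arguably the identification of $\omega(\Gamma^c)=\{R_0\}$ in (iii), which requires controlling the degeneracy of $B$ at $\varphi=0$ together with the zero eigenvalue of $J(R_0)$; a careful centre-manifold reduction may be needed to rule out exotic asymptotics (e.g., convergence to a heteroclinic arc inside $\{0\}\times[-c/2,0]$) and to confirm the tangency along $(c,-a_0A_*)$. The rest reduces to standard comparison and invariance arguments in the phase plane, powered by the first integral for the ground state from Case~B.
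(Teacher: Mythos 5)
Your proof is essentially correct and covers all four parts, but it takes a genuinely different route from the paper on (ii) and (iii), and the argument for (iv) is more explicit than what the paper records. Here is a detailed comparison.

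For (i), your argument (strict decrease of $\lambda_1^+(c)$, then the derivative identity $\frac{d(\psi_{c'}-\psi_c)}{d\varphi}\big|_{\varphi_*}=-\frac{c'-c}{B(\varphi_*)}<0$ at a putative first touching point) is exactly the paper's intended argument; the paper states it more tersely.

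For (ii), the paper proceeds directly: it bounds $h(\varphi)\geq -k(\varphi-\hat\varphi_1)$ on $[\tfrac12\hat\varphi_1,\hat\varphi_1]$, shows $\Gamma^c$ slips below the tilted line $\psi=\rho(\varphi-\hat\varphi_1)$ there, then below the horizontal line $\psi=-\tfrac12\rho\hat\varphi_1$ on $(0,\tfrac12\hat\varphi_1]$, for all $c<\tfrac12\rho\hat\varphi_1$. You instead use $\Gamma^0$ as a baseline, correctly convert the first integral of the $c=0$ trajectory (Case~D, not Case~B, but the integral is the same) to the pressure variable, deduce $\psi^0(\varphi)\to-\infty$ from $\int_0^{s_1}A'(r)f(r)\,dr>0$, and then invoke continuous dependence of the unstable manifold on $c$ to get $\psi(\varphi_0;c)<-N/2$, followed by the invariance of $\{\psi<-c\}$ (since $\dot\psi=-h<0$ on $\psi=-c$). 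This is a valid alternative; one small refinement worth making explicit is that once $\psi<\psi(\varphi_0;c)<-N/2<-c$, the sign $d\psi/d\varphi>0$ on $\{\psi<-c\}$ keeps $\psi(\varphi;c)<-N/2$ for all $\varphi<\varphi_0$, giving the strict inequality $\psi^c\leq -N/2<-c$ rather than merely $\psi^c\leq -c$.

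For (iii), the paper shows $\Gamma^c$ is trapped above the tilted line $\psi=\rho_1(\varphi-\hat\varphi_1)$ for $c$ large, yielding $\psi^c\geq -\rho_1\hat\varphi_1>-c$; you use the simpler horizontal barrier $\psi=-c/2$, with $\dot\psi=c^2/4-h(\varphi)>0$ there once $c>2\sqrt{\max h}$. Both give the required disjunction. Your candid remark that identifying $\omega(\Gamma^c)=\{R_0\}$ along $(c,-a_0A_*)$ needs a center-manifold argument is fair, but note the paper also leaves this part implicit — the substantive conclusion needed downstream is only $\psi^c>-c$, which both arguments establish without fully resolving the asymptotics at $R_0$.

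For (iv), the paper gives no details (``follows from the previous ones''); your argument supplies the missing content, and is in fact sharper: if $\psi^c\in(-\infty,0)$, then passing to the limit $\xi\to\infty$ in $\dot\psi=-\psi^2-c\psi-h(\varphi)$ forces $\psi^c(\psi^c+c)=0$, hence $\psi^c=-c$; combined with (i) this shows at most one $c$ can have a finite negative contact value, which is the uniqueness really needed to pin down $c_s$. This is the correct mechanism, even if the way the lemma phrases ``strictly increasing'' obscures the fact that $\psi^c$ only takes the values $-\infty$, $-c$, or $0$. No genuine gap, just a sharper reading than the lemma's wording suggests.
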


\begin{proof}
(i). Note that $(\hat{b}_1, \lambda^+_1(c))$ is the direction along which $\Gamma^c$ leaving the singular point $R_1$. Since $\lambda^+_1(c)$ is strictly decreasing in $c\geq 0$, we see that $\Gamma^{c'}$ lies above $\Gamma^c$ when they are near $R_1$. Then, both of them extend leftward, it is easy to show by contradiction that $\Gamma^{c'}$ remains above $\Gamma^c$ as long as they stay in $D$.

(ii). Denote $I_1:= (0,\frac12 \hat{\varphi}_1]$ and $I_2 := [\frac12 \hat{\varphi}_1, \hat{\varphi}_1)$. By our assumption on $f$ we see that, there exists $k>0$ such that
$$
h(\varphi) \geq -k (\varphi - \hat{\varphi}_1),\qquad \varphi \in I_2.
$$
Set $B^0:= \sup\limits_{r\in I_1\cup I_2} B(r)$. We take $\rho >0$ and $c\geq 0$ with
\begin{equation}\label{cond-rho-c}
2 B^0 \rho^2 < k,\quad  \rho < \lambda^+_1 (1),\quad 0\leq c <\min\left\{1, \frac{k}{2\rho}, \frac{\rho}{2} \hat{\varphi}_1 \right\}.
\end{equation}
Then, in a small neighborhood of $R_1$, $\Gamma^c$ lies below $\Gamma^1$ since $c<1$, and so it is  also below the straight line $\ell_2:\ \psi = \rho(\varphi -\hat{\varphi}_1)$ in this neighborhood.

We now show that this remains hold for all $\varphi\in I_2$. By contradiction, we assume $\Gamma^c$ contacts $\ell_2$ at some points in the region $\varphi\in I_2$. Denote the rightmost one of such points by $P(\varphi_0, \psi_0)$. Then at $\varphi=\varphi_0$ we have
$$
B^0 \rho \geq B(\varphi_0) \rho \geq  B(\varphi_0) \frac{d\psi(\varphi_0;c)}{d\varphi} = -\psi_0 -c - \frac{h(\varphi_0)}{\psi_0} \geq -c + \frac{k}{\rho} > \frac{k}{2\rho},
$$
contradict the first inequality in \eqref{cond-rho-c}.
As a consequence, we have
$$
\psi \Big( \frac12 \hat{\varphi}_1;c \Big) < \psi^* := - \frac12 \rho \hat{\varphi}_1 ,
$$
and so $\Gamma^c$ lies below the horizontal line $\ell_1:\ \psi= \psi^*$ when $\varphi$ is near $\frac12 \hat{\varphi}_1$. A simple discussion as above shows that this also remains true for all $\varphi\in I_1$. Consequently, when $c>0$ satisfies the conditions in \eqref{cond-rho-c} the conclusions in (ii) hold.

(iii). For any given $\rho_1>0$, denote by $\ell_3$ the straight line $\psi= \rho_1 (\varphi -\hat{\varphi}_1)$. We set $D_1:= \{(\varphi,\psi) \mid 0<\varphi<\hat{\varphi}_1, \rho_1 (\varphi -\hat{\varphi}_1) \leq \psi <0\}$, and show that, when $c>0$ is sufficiently large, $\Gamma^c$ remains in $D_1$. Clearly, this is true when $\Gamma^c$ is near $R_1$ since $\lambda^+_1(c)$ is small when  $c$ is large. By contradiction, if $\Gamma^c$ escapes from $D_1$ for the first time from some point $P(\varphi_1, \psi_1)\in \ell_3$, then
$$
B(\varphi_1) \rho_1 \leq B(\varphi_1) \frac{d\psi(\varphi_1;c)}{d\varphi} = -\psi_1 - c -\frac{h(\varphi_1)}{\psi_1}.
$$
This is impossible when $c>0$ is sufficiently large. Hence, when $c$ is large we have $\psi^c\geq -\rho_1 \hat{\varphi}_1 > -c$.

(iv). The conclusion follows from the previous ones.
\end{proof}

Based on this lemma we give the main conclusions in this subsection.

\begin{prop}\label{prop:last}
Assume \eqref{ass-A} and \eqref{ass-f}. Then we have the following traveling waves.
\begin{enumerate}[{\rm (i)}]
\item There exists $c_z>0$ such that the equation in \eqref{p} has a traveling wave solution $Q_{c_z}(x-c_z t)$ connecting $1$ and $s_1$, as stated in \eqref{trav-cond};

\item There exists  $c_s>0$ such that the equation in \eqref{p} has a unique small sharp wave  $Q_{c_s}(x-c_s t)$ connecting $s_1$ and $0$, as stated in \eqref{sharp-wave};

\item When $c_z > c_s$, there exists $c_b \in (c_s, c_z)$ such that the equation in \eqref{p} has a unique big sharp wave $Q_{c_b}(x-c_b t)$ connecting $1$ and $0$, as stated in \eqref{sharp-wave};

\item When $c_s= c_z$, for any $c'\in (0,c_s)$ and any $d$ with $0<1-d\ll 1$, the equation in \eqref{p} has a traveling wave solution $U_1(x-c' t)$ such that $U_1(x)$ has support $[-L_1, 0]$, $\max\limits_{-L_1\leq x\leq 0}U_1(x)=d$ and $[\Lambda(U_1)]'(0-0) < -c'$;

\item When $c_s =c_z$, for any $c''>c_s$, the equation in \eqref{p} has a traveling wave solution $U_2(x-c'' t)$ such that $U_2(x)>0$ in $(-L_2, 0)$ for some $L_2 \in (0,\infty]$,
    $$
    U_2(x)=0 \mbox{ for }x\geq 0,\quad U_2(x)\to +\infty\ (x\to -L_2) \ \mbox{\ \ and\ \ }    [\Lambda(U_2)]'(0-0) > -c''.
    $$
\end{enumerate}
\end{prop}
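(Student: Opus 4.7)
The overall framework is phase plane analysis for the desingularized system \eqref{qptwb}, whose five equilibria $R_0,\dots,R_4$ and their linearizations have already been classified. All the waves in (i)--(v) correspond to orbits of this system in the strip $0\le\varphi\le\hat\varphi_3$, $\psi<0$, with the wave speed $c$ playing the role of a shooting parameter. The plan is to use Lemma 6.3 as the cornerstone and carry out analogous shooting arguments at the other saddle $R_3$.

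\emph{Part (ii), small sharp wave.} I will apply Lemma 6.3 directly. The sharp-wave condition $c_s=-[\Lambda(Q_{c_s})]'(0-0)$ is equivalent to requiring that the trajectory $\Gamma^c$ emanating from the saddle $R_1$ into $D$ hits the negative $\psi$-axis exactly at $(0,-c)$. Define $F(c):=\psi^c+c$ (set $F(c)=-\infty$ if $\Gamma^c$ descends to $\psi=-\infty$ without meeting the axis). By Lemma 6.3 (ii), (iii), (iv), $F(c)<0$ for small $c>0$, $F(c)>0$ for large $c$, and $F$ is strictly monotone, so there is a unique $c_s>0$ with $\psi^{c_s}=-c_s$. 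Writing the orbit as a graph $\psi=\Psi(\varphi;c_s)$ on $[0,\hat\varphi_1]$ and integrating $\varphi'=\Psi(\varphi;c_s)$ produces the pressure profile $V_{c_s}$, from which $Q_{c_s}=\lambda\circ V_{c_s}$ is the desired small sharp wave.

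\emph{Part (i), classical wave from $1$ to $s_1$.} I will shoot from $R_3$ along its unstable manifold with slope $\lambda_3^-(c)<0$ into the strip $\hat\varphi_1<\varphi<\hat\varphi_3$, $\psi<0$, calling this trajectory $\tilde\Gamma^c$. The monotonicity argument of Lemma 6.3(i) applies verbatim, so the $\tilde\Gamma^c$'s are ordered by $c$. Combining this with the sign condition $\int_{s_1}^1 A'(r)f(r)\,dr>0$ in \eqref{ass-f}, which acts as an energy budget on the $c=0$ limit (cf.\ the first integral \eqref{shoucijifen}), I expect to show that for small $c$ the trajectory crosses the vertical line $\varphi=\hat\varphi_1$ with $\psi<0$ (hence continues into $D$ and overshoots $R_1$), while for large $c$ it crosses the $\varphi$-axis inside $(\hat\varphi_2,\hat\varphi_3)$ (undershoots $R_1$ from above). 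Intermediate value and monotonicity then yield a unique $c_z>0$ at which $\tilde\Gamma^{c_z}$ enters $R_1$ along its stable eigenvector, producing $Q_{c_z}$.

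\emph{Parts (iii)--(v).} When $c_s<c_z$, for $c\in(c_s,c_z)$ the orbit $\tilde\Gamma^c$ overshoots $R_1$ and crosses the $\psi$-axis at a point $\tilde\psi^c<0$ that is strictly increasing in $c$; continuity at the two endpoints ($\tilde\psi^c\to-c_s$ as $c\downarrow c_s$, $\tilde\psi^c\to 0$ as $c\uparrow c_z$) then gives a unique $c_b\in(c_s,c_z)$ with $\tilde\psi^{c_b}=-c_b$, which is the big sharp wave. When $c_s=c_z$, the shooting curve $c\mapsto\tilde\psi^c$ no longer crosses the Darcy line $\psi=-c$, so I will instead perturb the exceptional orbit $\tilde\Gamma^{c_z}$: pushing $c$ slightly below $c_s$ swings the orbit to miss $R_1$ from below, causing it to loop from $(0,\psi_+)$ around $R_2$ back to $(0,\psi_-)$ with $\psi_-<-c'$; a continuity argument in the maximum coordinate $\bar\varphi$ of this loop gives loops of every sufficiently small height $1-d$, producing $U_1$ in (iv). Pushing $c$ slightly above $c_s$ swings it to the other side, yielding a monotone orbit that is trapped away from all finite equilibria and must escape to $\varphi=+\infty$ in finite $\xi$-time, producing $U_2$ in (v); the strict Darcy inequalities in (iv) and (v) are inherited from the strict ordering $\psi_-\ne -c'$ and $\psi_-\ne -c''$ away from $c=c_s$.

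\emph{Main obstacle.} The chief difficulty is in part (i): ruling out oscillatory loss of monotonicity near $R_2$, which may be a focus when $c$ is small, so that the map $c\mapsto\tilde\Gamma^c$ is genuinely monotone all the way from $R_3$ to a neighbourhood of $R_1$. The integral condition in \eqref{ass-f} gives a global budget but needs to be upgraded to a $c$-dependent Lyapunov functional (a modification of the conserved quantity in \eqref{shoucijifen} with a dissipation term proportional to $c$) to track $\tilde\Gamma^c$ across the focus region. A secondary technical point, needed to glue part (i) with part (iii) in the regime $c_s<c_z$, is a transversality check at $R_1$ comparing the stable slope $\lambda_1^-(c_z)$ with the unstable slope $\lambda_1^+(c)$ for $c$ near $c_s$.
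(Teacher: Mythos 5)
Your overall framework (shooting in the desingularized $\varphi\psi$-plane, using Lemma~\ref{lem:3-TW} as the engine) is the same one the paper uses, and parts (i) and (ii) of your outline are consistent with the paper's treatment: (ii) via the monotone shooting function $c\mapsto\psi^c+c$ from Lemma~\ref{lem:3-TW}, and (i) by the standard bistable phase-plane argument (the paper simply cites \cite{A2, AW1} here). There are, however, genuine gaps in (iii), (iv), (v).

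In (iii), your endpoint limits for the intermediate value argument are wrong. You write $\tilde\psi^c\to -c_s$ as $c\downarrow c_s$ and $\tilde\psi^c\to 0$ as $c\uparrow c_z$, but these would give $\tilde\psi^c+c\to 0$ at the left endpoint and $\tilde\psi^c+c\to c_z>0$ at the right endpoint -- no sign change, so the IVT as you stated it does not produce $c_b$. The correct facts are: at $c=c_s$ the orbit from $R_3$ lies strictly below the orbit from $R_1$ (Lemma~\ref{lem:3-TW}(i)-type comparison), so $\psi_3^{c_s}<\psi_1^{c_s}=-c_s$, giving $\psi_3^{c_s}+c_s<0$ strictly; and as $c\uparrow c_z$, the orbit from $R_3$ passes arbitrarily close to $R_1$ and follows its unstable manifold, so $\psi_3^c\to\psi_1^{c_z}$, which satisfies $\psi_1^{c_z}>\psi_1^{c_s}=-c_s>-c_z$ by the monotonicity in Lemma~\ref{lem:3-TW}(iv). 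Hence $\psi_3^c+c\to\psi_1^{c_z}+c_z>0$. With these two facts the IVT yields $c_b\in(c_s,c_z)$.

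In (iv) and (v), your proposal only treats $c'$ slightly below $c_s$ and $c''$ slightly above $c_s$, via perturbation of the exceptional connecting orbit at $c_z=c_s$. But the proposition asserts the result for \emph{every} $c'\in(0,c_s)$ and \emph{every} $c''>c_s$, and indeed these full ranges are needed in Lemma~\ref{lem:dht-o}. The paper's proof is direct and uses no perturbation: for (iv), for any $c'<c_s$, Lemma~\ref{lem:3-TW} gives $-c'>\psi_1^{c'}>\psi_3^{c'}$; then the forward orbit through $(\varphi^*,0)$ with $\varphi^*=\Lambda(d)\in(\hat\varphi_1,\hat\varphi_3)$ is squeezed between the separatrices from $R_1$ and $R_3$ and so lands at $(0,\psi^*)$ with $\psi^*\in(\psi_3^{c'},\psi_1^{c'})$, hence $\psi^*<-c'$, which is exactly $[\Lambda(U_1)]'(0-0)<-c'$. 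For (v), for any $c''>c_s=c_z$ one has $-c''<\psi_1^{c''}$, and an orbit through $(0,\psi^*)$ with $\psi^*\in(-c'',\psi_1^{c''})$ lies below the stable manifold of $R_1$, hence continues rightward past $\varphi=\hat\varphi_3$. Your description of the $U_1$-orbit is also slightly off: the compactly supported profile corresponds to an orbit from $(0,\psi_+)$ up through $(\Lambda(d),0)$ and back down to $(0,\psi_-)$, not a perturbation of the separatrix emanating from $R_3$ -- the latter has no left free boundary.
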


\begin{proof}
(i). Since $f(u)$ is a bistable nonlinearity in $u\in [s_1,1]$, the existence of the traveling wave $Q_{c_z}(x-c_z t)$ follows from the standard phase plane analysis as in reaction (linear) diffusion equation (cf. \cite{A2, AW1}).

(ii) and (iii). Using the properties in Lemma \ref{lem:3-TW}, one can constructs the small and big sharp waves as in \cite{A2}.

(iv). For $i=1$ or $i=3$, denote the trajectory starting from $R_i$ by $\Gamma^c_i$, and assume it contacts the negative $\psi$-axis at $(0,\psi^c_i)$, where $\psi^c_i\in [-\infty, 0)$. When $c'<c_s$, we know from Lemma \ref{lem:3-TW} that $-c' > \psi^{c'}_1 > \psi^{c'}_3$.
Therefore, for any $\varphi^*\in (\hat{\varphi}_1, 1)$, the trajectory passing $(\varphi^*,0)$ connects $(0,\psi^*)$ for some $\psi^* \in (\psi^{c'}_3, \psi^{c'}_1)$. It gives the profile of the desired traveling wave solution $U_1$.

(v). When $c''>c_s =c_z$, we know from Lemma \ref{lem:3-TW} that $-c'' < \psi^{c''}_1$.
Therefore, for any $\psi^*\in (-c'', \psi^{c''}_1)$, the trajectory starting from $(0,\psi^*)$ extends rightward and goes across the line $\{(\varphi,\psi) \mid \varphi =\hat{\varphi}_3, \psi<0\}$.
So it gives the profile of the desired traveling wave $U_2$.

This proves the conclusions.
\end{proof}

In addition, when $c=c_s$ and when we use the variable $\xi$, we find, as discussed in \cite{A2} for 
porous media equation, there are 
four types of trajectories on the $\varphi \psi$-phase plane.  The first type connects $R_1$ to a point on the negative $\psi$-axis, 
which gives the small sharp wave $Q_{c_s}$ (we call it a Type I traveling wave). The second 
and the third types of trajectories come from the positive $\psi$-axis, goes across the positive 
$\varphi$-axis and then tending to $R_0$ from lower right, one along the vector $(0,-1)$ and the 
other along $(-c_s, a_0 A_*)$ (recall that $R_0$ is a node). They give Type II traveling wave $\Phi_2(x-c_s t)$ 
and Type III $\Phi_3(x-c_s t)$ respectively:
$$
\left\{
\begin{array}{l}
\Phi'_2 (0) = +\infty, \quad \Phi'_2 (\xi)>0 \mbox{ in } \xi\in (0,\xi_2),\quad \Phi'_2(\xi)<0 \mbox{ in } x\in (\xi_2, \xi^*_2) 
\mbox{ and } \Phi_2(\xi^*_2 ) = \Phi'_2 (\xi^*_2) =0;\\
\Phi'_3 (0) = +\infty,\quad \Phi'_3(\xi)>0 \mbox{ in } \xi\in (0,\xi_3),\quad \Phi'_3(\xi)<0 \mbox{ and } \Phi_3(\xi)>0 \mbox{ for }\xi > \xi_3.
\end{array}
\right.
$$
The fourth type of trajectory comes from the lower right of $R_1$, goes leftward across
the domain $D$, and then goes down to infinity along the negative $\psi$-axis. It gives Type IV traveling wave $\Phi_4(x-c_s t)$ with 
$$
\Phi_4 (\xi) > \hat{\varphi}_1 \mbox{ for } \xi <0,\quad \Phi'_4 (\xi)<0 \mbox{ for } \xi \in (-\infty, \xi_4),
\quad \Phi_4 (\xi_4)=0 \mbox{ and } \Phi'_4 (\xi_4 -0) = -\infty.
$$


\end{document}